\newtheorem{thm}{Theorem}[section]
\newtheorem{theorem}[thm]{Theorem}
\newtheorem{proposition}[thm]{Proposition}
\newtheorem{lemma}[thm]{Lemma}
\newtheorem{corollary}[thm]{Corollary}
\DeclareMathOperator{\rank}{rank}
\renewcommand{\tilde}{\widetilde}
\title{Sufficient conditions for the global rigidity of periodic graphs}
\author{Vikt\'oria E. Kaszanitzky\thanks{
Budapest University of Technology and Economics, Magyar tud\'osok krt 2., Budapest, 1117, Hungary and the MTA-ELTE Egerv\'ary Research Group on Combinatorial Optimization, P\'azm\'any P\'eter s\'et\'any 1/C, Budapest, 1117, Hungary
(\texttt{kaszanitzky@cs.bme.hu}).
}
\and
 Csaba Kir\'aly\thanks{Department of Operations Research, ELTE E\"otv\"os Lor\'and University and the MTA-ELTE Egerv\'ary Research Group on Combinatorial Optimization, P\'azm\'any P\'eter s\'et\'any 1/C, Budapest, 1117, Hungary, ({\tt cskiraly@cs.elte.hu}).
}
\and
Bernd Schulze \thanks{
Department of Mathematics and Statistics,
Lancaster University,
Lancaster,
LA1 4YF, United Kingdom
(\texttt{b.schulze@lancaster.ac.uk}).}
}
\begin{document}


\maketitle

\begin{abstract}

Tanigawa (2016) showed that vertex-redundant rigidity of a graph implies its global
rigidity in arbitrary dimension. We extend this result to periodic graphs under fixed
lattice representations. A periodic graph is vertex-redundantly rigid if the deletion of a single
vertex orbit under the periodicity results in a periodically rigid graph. Our proof is
similar to the one of Tanigawa, but there are some added difficulties. First,
it is not known whether periodic global rigidity is a generic property. This issue is
resolved via a slight modification of a recent result of Kaszanitzy, Schulze and Tanigawa (2016).
Secondly, while the rigidity of finite graphs in $\mathbb{R}^d$ on at most $d$ vertices
obviously implies their global rigidity, it is non-trivial to prove a similar result for 
periodic graphs. This is accomplished by extending a result of Bezdek and Connelly (2002) 
on the existence of a continuous movement between two equivalent $d$-dimensional realisations
of a single graph in $\mathbb{R}^{2d}$ to periodic frameworks.

As an application of our result, we give a necessary and sufficient condition for the
global rigidity of generic periodic body-bar frameworks in arbitrary dimension. This
provides a periodic counterpart to a result of Connelly, Jord\'{a}n and Whiteley (2013) 
regarding the global rigidity of generic finite body-bar frameworks.

%
%
\end{abstract}

\noindent {\em Key words}: rigidity; global rigidity; body-bar framework, periodic framework.


\section{Introduction}
\label{sec:intro}

A $d$-dimensional bar-joint framework is a pair $(G,p)$, where $G$ is a simple graph and $p$ is a map which assigns a point in $\mathbb{R}^d$ to each vertex of $G$. We think of $(G,p)$ as a straight line realisation of $G$ in $\mathbb{R}^d$, where the edge lengths are measured by  the standard Euclidean metric. Loosely speaking, $(G,p)$ is \emph{rigid} if any edge-length preserving continuous motion of the vertices of $(G,p)$ is necessarily a congruent motion (i.e., a motion corresponding to an isometry of $\mathbb{R}^d$). Moreover, $(G,p)$ is \emph{globally rigid} if it is the only framework in $d$-space with the same graph and edge lengths, up to congruent motions. It is well known that both rigidity and global rigidity are generic properties, in the sense that a generic realisation of a graph $G$ in $\mathbb{R}^d$  is rigid (globally rigid) if and only if \emph{every} generic realisation of $G$ in $\mathbb{R}^d$  is rigid (globally rigid) \cite{asiroth,gluck,gortler2010}. Therefore, a graph $G$ is called rigid (globally rigid) if some (equivalently any) generic realisation of $G$ is rigid (globally rigid).

The celebrated Laman's theorem from 1970 gives a combinatorial characterisation of the rigid graphs in  $\mathbb{R}^2$ \cite{Lamanbib}. Extending this result to higher dimensions is a fundamental open problem in distance geometry \cite{W1}. Similarly, a combinatorial characterisation of the globally rigid graphs in $\mathbb{R}^2$ has been obtained by Jackson and Jord\'an in 2005 \cite{jj}, but the problem of extending this result to higher dimensions also remains open. For the special class of body-bar frameworks \cite{W1}, however, complete combinatorial characterisations for rigidity and global rigidity have been established in all dimensions in \cite{Tay} and \cite{cjw}, respectively.

Tanigawa recently proved the following result, which is an important new tool to investigate the global rigidity of frameworks in $\mathbb{R}^d$.

\begin{theorem}[\cite{T1}] \label{thm2rig} Let $G$ be a rigid graph in $\mathbb{R}^d$ and suppose $G-v$ remains rigid for every vertex $v$ of $G$. Then $G$ is globally rigid in $\mathbb{R}^d$.
\end{theorem}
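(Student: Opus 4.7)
I would deduce Theorem~\ref{thm2rig} from Connelly's sufficient condition for generic global rigidity: a generic realisation $(G,p)$ in $\mathbb{R}^d$ with $|V(G)|\geq d+2$ is globally rigid provided it admits an equilibrium stress whose stress matrix attains the maximum possible rank $|V(G)|-d-1$. The cases $|V(G)|\leq d+1$ are immediate, since a rigid graph on at most $d+1$ vertices in $\mathbb{R}^d$ must be complete, and complete graphs on at most $d+1$ vertices are trivially globally rigid. Hence it suffices, for each $G$ satisfying the hypotheses with $|V(G)|\geq d+2$, to fix one generic realisation $(G,p)$ and exhibit an equilibrium stress of the required maximal rank.

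The desired stress is constructed from the vertex-redundant rigidity, vertex by vertex. For each vertex $v$, the subframework $(G-v,\,p|_{V-v})$ is generic and rigid while $(G,p)$ itself is rigid; a short edge-count (using $|V(G)|\geq d+2$ together with the rigidity of both $G$ and $G-v$) shows that the generic stress space of $(G,p)$ strictly contains the subspace of stresses that vanish on every edge incident to $v$. Hence for each $v$ I can select an equilibrium stress $\omega^v$ on $(G,p)$ whose support meets at least one edge incident to $v$; write $\Omega^v:=\Omega(\omega^v,p)$ for the corresponding stress matrix.

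I would then take a generic linear combination $\omega=\sum_v\lambda_v\omega^v$ and argue that $\Omega(\omega,p)=\sum_v\lambda_v\Omega^v$ attains the maximum rank $|V(G)|-d-1$. The upper bound on this rank is automatic because the $(d+1)$-dimensional subspace of affine functions of $p$ always lies in the kernel of any stress matrix. For the matching lower bound, any extra common kernel vector $z\in\bigcap_v\ker\Omega^v$, linearly independent of the affine functions, would---via the row of $\Omega^v$ indexed by $v$ together with the support of $\omega^v$ at $v$ and the genericity of $p$---be forced to agree with an affine function on the neighbourhood of every vertex, and therefore (again by genericity and the connectivity of $G$) on all of $V(G)$; this contradicts the choice of $z$. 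Connelly's theorem then yields the global rigidity of $(G,p)$.

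The main obstacle I anticipate is the kernel-intersection analysis just sketched: ruling out extra common kernel vectors in $\bigcap_v\ker\Omega^v$ requires the full strength of the genericity of $p$ (not merely rigidity of each $G-v$) together with a careful analysis of the local action of $\Omega^v$ at the distinguished vertex $v$, where the support of $\omega^v$ is active. Once this maximum rank is established, Connelly's theorem immediately delivers the global rigidity, and the argument is complete.
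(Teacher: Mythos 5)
The paper does not reprove Theorem~\ref{thm2rig}; it cites it from Tanigawa~\cite{T1} and then develops a periodic analogue (Theorem~\ref{thm:2RGR}). The proof scheme the paper actually uses for that analogue --- and which mirrors Tanigawa's original argument --- is algebraic rather than stress-based: after a base case handled by Corollary~\ref{cor:smallrigidglobrigid}, one inducts on $|V(G)|$ via the vertex-contraction Lemma~\ref{lem:redrig}. There, one pins the framework, takes an arbitrary equivalent configuration $q$, and uses rigidity of $G-v$ together with the field-theoretic Lemma~\ref{lem:pq} to conclude $\overline{\mathbb{Q}(p')}=\overline{\mathbb{Q}(q')}$; the $\geq d+1$ edges at $v$ then produce a quadratic identity in $p(v)$ which, by genericity, forces $PQ^{-1}$ to be orthogonal, hence $q'$ is $G_v$-equivalent to $p'$, and global rigidity of $G_v$ (inductive hypothesis) plus the affine-spanning condition finish the argument. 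Stresses and stress matrices never appear.

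Your proposal, by contrast, tries to reach the conclusion through Connelly's sufficient condition by directly constructing a maximum-rank equilibrium stress. That is a genuinely different route, but there is a real gap in it, at the generic-rank step. You observe, correctly, that for each $v$ there is an equilibrium stress $\omega^v$ whose support meets an edge at $v$ (this uses $\deg v\geq d+1$, which does follow from the hypotheses when $|V(G)|\geq d+2$, since a neighbour $u$ of a degree-$d$ vertex $v$ would leave $v$ with degree $d-1$ in $G-u$). You then want to conclude that a generic combination $\Omega=\sum_v\lambda_v\Omega^v$ has $\dim\ker\Omega=d+1$ by showing $\bigcap_v\ker\Omega^v$ consists only of affine functions. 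But this inference is invalid: for a linear pencil of symmetric matrices, the kernel of a generic member can be strictly larger than the common kernel $\bigcap_v\ker\Omega^v$. A small example: in $\mathbb{R}^3$, take $A_1=e_1e_2^T+e_2e_1^T$ and $A_2=e_1e_3^T+e_3e_1^T$; then $\ker A_1\cap\ker A_2=\{0\}$, yet $\det(\lambda_1A_1+\lambda_2A_2)\equiv 0$, so every member of the pencil has a nontrivial kernel. So ruling out extra vectors in $\bigcap_v\ker\Omega^v$ does not control the generic rank of $\sum_v\lambda_v\Omega^v$, and the claimed lower bound on $\rank\Omega$ does not follow. (In addition, the argument that an extra common kernel vector ``would be forced to agree with an affine function on the neighbourhood of every vertex'' is itself only gestured at; the row equation of $\Omega^v$ at $v$ gives a single linear relation on $z$ restricted to $N(v)\cup\{v\}$, not agreement with an affine function.) Establishing the existence of a maximum-rank stress directly from vertex-redundant rigidity is precisely the difficulty that Tanigawa's algebraic argument was designed to avoid, and I do not see how to repair your step~4 without effectively reproving the theorem by other means.

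Once a maximum-rank stress \emph{is} in hand, the appeal to Connelly's sufficient condition and your base case $|V(G)|\leq d+1$ are both fine. The problem is the stress construction itself.
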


In particular, the following combinatorial characterisation of globally rigid body-bar frameworks in $\mathbb{R}^d$ by Connelly, Jord\'an and Whiteley \cite{cjw} easily follows from this result.

\begin{theorem}[\cite{cjw,T1}]\label{thmbbglobal} A generic body-bar framework is globally rigid in  $\mathbb{R}^d$ if and only if it is rigid in $\mathbb{R}^d$ and it remains rigid after the removal of any edge.
\end{theorem}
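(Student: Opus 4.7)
The plan is to derive Theorem \ref{thmbbglobal} from Theorem \ref{thm2rig} via a classical body-replacement reduction that converts a body-bar framework into a bar-joint framework whose vertex-redundant rigidity corresponds to the edge-redundant rigidity of the original.

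For the necessity direction, global rigidity trivially implies rigidity, so the task reduces to showing edge-redundant rigidity. This follows from a standard Hendrickson-type argument: if $G$ is generically body-bar rigid but $G-e$ fails to be so for some bar $e=uv$, then the variety of body-bar realisations equivalent to $(G-e,p)$ has dimension strictly greater than $\binom{d+1}{2}=\dim \mathrm{SE}(d)$, and the bar-length function $p'\mapsto \|p'_{x^u_e}-p'_{x^v_e}\|$ is non-constant on this variety (otherwise $e$ would be implied by $G-e$, contradicting rigidity of $G$ together with non-rigidity of $G-e$). Slicing by the codimension-one hypersurface matching the original length of $e$ still yields a positive-dimensional family of $G$-equivalent realisations modulo $\mathrm{SE}(d)$, contradicting global rigidity of $(G,p)$.

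For sufficiency, assume $G$ is body-bar rigid and edge-redundantly rigid. Construct a bar-joint graph $H$ by replacing each vertex $v\in V(G)$ with a cluster $V_v$ of $n_v:=\max\{d+2,\deg_G(v)\}$ vertices, connecting all pairs within $V_v$ to form a complete graph, and then replacing each bar $e=uv\in E(G)$ by a single edge joining distinct representatives $x^u_e\in V_u$ and $x^v_e\in V_v$, chosen so that distinct bars receive distinct endpoints. Since each cluster is a $K_{n_v}$ with $n_v\geq d+1$, it is generically rigid and globally rigid in $\mathbb{R}^d$. The standard theory of body-replacement then gives that $H$ is generically bar-joint rigid (resp.\ generically bar-joint globally rigid) in $\mathbb{R}^d$ if and only if $G$ is body-bar rigid (resp.\ body-bar globally rigid): bar-joint equivalent realisations of $H$ restrict to congruent copies on each cluster by global rigidity of $K_{n_v}$, so they encode exactly the rigid motions of the bodies of $G$.

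It remains to show that $H$ is vertex-redundantly rigid, whereupon Theorem \ref{thm2rig} gives global rigidity of $H$ and therefore body-bar global rigidity of $G$. Let $x\in V(H)$ lie in the cluster $V_v$. Since $n_v-1\geq d+1$, the induced complete graph on $V_v\setminus\{x\}$ remains generically rigid in $\mathbb{R}^d$. If $x$ is not a bar endpoint, then $H-x$ is still a valid bar-joint representation of the unchanged body-bar graph $G$, hence generically rigid by assumption. If $x=x^v_e$ for some bar $e$, then $H-x$ is precisely the bar-joint representation of $G-e$ produced by the construction, and this is generically rigid by the edge-redundant rigidity of $G$. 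Either way $H-x$ is rigid, so Theorem \ref{thm2rig} applies. The main technical obstacle is the equivalence of rigidity and global rigidity under the body-replacement construction; once this correspondence is granted, the deduction of Theorem \ref{thmbbglobal} from Theorem \ref{thm2rig} is essentially mechanical.
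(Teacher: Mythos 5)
Your proof is essentially correct and follows the same route the paper indicates (and spells out explicitly in the periodic analogue, Theorem~\ref{thm:bodybarglobalrig}): realise the body-bar framework as a bar-joint framework where each body is a complete graph on at least $d+1$ vertices, observe that the bars are pairwise vertex-disjoint so that deleting any vertex kills at most one bar, conclude that bar-redundant rigidity forces vertex-redundant rigidity of the bar-joint representation, and then invoke Theorem~\ref{thm2rig}. The only differences are cosmetic: you build your own cluster of size $\max\{d+2,\deg_G(v)\}$ rather than using the paper's canonical body-bar graph $G_H$, whose cluster $B_H^v$ has $d+1+\deg_G(v)$ vertices and for which the same argument goes through directly (after removing a vertex, the body still has at least $d+\deg_G(v)\geq d+1$ vertices, so it remains a rigid complete graph). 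Using $G_H$ would avoid the hand-wave over the ``standard theory of body-replacement'' -- since the paper \emph{defines} body-bar rigidity and global rigidity via $(G_H,p)$ -- whereas with your custom $H$ you implicitly rely on the (true, but unstated here) fact that any two faithful bar-joint representations of the same body-bar graph with rigid clusters of size $\geq d+1$ have the same generic rigidity and global rigidity. If you switch to $G_H$ that sentence becomes unnecessary, and the claims ``$H-x$ is a valid representation of $G$'' and ``$H-x$ is the representation of $G-e$'' become exact rather than approximate. Your necessity direction is the standard Hendrickson argument and is also what the cited references use.
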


In Sections \ref{sec:2rigid} and \ref{sec:bbglob}, we obtain analogues of these results for infinite periodic frameworks under fixed lattice representations. Due to their applications in fields such as crystallography, materials science, and engineering, the rigidity and flexibility of periodic structures has seen an increased interest in recent years (see e.g. \cite{bs10,BST12,kst,mt13,rossd,ross2}). In particular,
 combinatorial characterisations of  generic rigid and globally rigid periodic bar-joint frameworks under fixed lattice representations in $\mathbb{R}^2$ were obtained in \cite{ross2} and \cite{kst}, respectively. Analogous to the situation for finite frameworks, extensions of these results to higher dimensions remain key open problems in the field.

For the special class of periodic body-bar frameworks, however, Ross gave a  combinatorial characterisation for generic rigidity in $\mathbb{R}^3$ \cite{rossbb}, and this result was recently extended to all dimensions by Tanigawa in \cite{T2} (see also Theorem~\ref{thm:bodybarrig}).
As an application of the main result of Section \ref{sec:2rigid} (Theorem~\ref{thm:2RGR}), we give the first combinatorial characterisation of generic \emph{globally rigid}  periodic body-bar frameworks  in all dimensions in Section~\ref{sec:bbglob} (Theorem~\ref{thm:bodybarglobalrig}).


\section{Preliminaries}
\label{sec:pre}
\subsection{$\Gamma$-labelled graphs and periodic graphs} \label{subsec:gain}

Let $\Gamma$ be a group isomorphic to $\mathbb{Z}^k$ for some integer \(k>0\).
A {\em $\Gamma$-labelled graph} is a pair $(G,\psi)$ of a finite directed (multi-) graph $G$ and a map $\psi:E(G)\rightarrow \Gamma$. 

For a given $\Gamma$-labelled graph $(G,\psi)$, one may construct a \emph{$k$-periodic  graph}
$\tilde{G}$ by setting $V(\tilde{G})=\{\gamma v_i: v_i\in V(G), \gamma\in \Gamma\}$
and $E(\tilde{G})=\{\{\gamma v_i, \psi(v_iv_j) \gamma v_j\}: (v_i, v_j)\in E(G), \gamma \in \Gamma\}$. This $\tilde{G}$ is called the {\em covering} of $(G, \psi)$ and $\Gamma$ is the {\em periodicity} of $\tilde{G}$. The graph $(G,\psi)$ is also called the {\em quotient $\Gamma$-labelled graph} of $\tilde{G}$.

To guarantee that the covering of $(G, \psi)$ is a simple graph, we assume that $(G,\psi)$ has no parallel edges with the same label when oriented in the same direction. Moreover, we assume that $(G,\psi)$ has no loops. This is because a loop in $(G,\psi)$ (with a non-trivial label) does not give rise to any constraint when we study the rigidity and flexibility of the covering $\tilde{G}$ under fixed lattice representations, as will become clear below.


Note that the orientation of $(G,\psi)$ is only used as a reference orientation and may be changed, provided that we also modify $\psi$ so that if an edge has a label $\gamma$ in one direction, then it has the label $\gamma^{-1}$ in the other direction. The resulting  $\Gamma$-labelled graph still has the same covering $\tilde{G}$.

It is also often useful to modify $(G,\psi)$ by using the switching operation. A {\em switching} at $v\in V(G)$  by $\gamma\in \Gamma$ changes $\psi$ to $\psi'$
defined by $\psi'(e)=\gamma \psi(e)$ if $e$ is directed from $v$, $\psi'(e)=\gamma^{-1} \psi(e)$ if $e$ is directed to $v$, and $\psi'(e)=\psi(e)$ otherwise. It is easy to see that a switching operation performed on a vertex in $(G,\psi)$ does not alter the covering $\tilde{G}$, up to isomorphism. 

Given a $\Gamma$-labelled graph $(G,\psi)$, we define a \emph{walk} in $(G,\psi)$ as an alternating sequence $v_1,e_1,v_2\ldots, e_k,v_{k+1}$  of vertices and edges such that $v_i$ and $v_{i+1}$ are the endvertices of $e_i$. For a  closed walk $C=v_1,e_1,v_2,\ldots, e_k,v_1$ in $(G,\psi)$, let $\psi(C)=\prod_{i=1}^k\psi(e_i)^{\textrm{sign}(e_i)}$, where $\textrm{sign}(e_i)=1$ if $e_i$ has forward direction in $C$, and $\textrm{sign}(e_i)=-1$ otherwise.
For a subgraph $H$ of $G$ define
$\Gamma_H$ as the subgroup of $\Gamma$ generated by the elements $\psi(C)$, where $C$ ranges  over all closed walks in $H$.
The {\em rank} of $H$ is defined to be the rank of $\Gamma_H$.
Note that the rank of $G$ may be less than the rank of $\Gamma$, in which case the covering graph $\tilde{G}$ contains an infinite number of connected components.



\subsection{Periodic bar-joint frameworks}
Recall that a pair $(G,p)$ of a simple graph $G=(V,E)$ and a map
$p:V\rightarrow \mathbb{R}^d$ is called a {\em (bar-joint) framework} in $\mathbb{R}^d$.
A periodic framework is a special type of infinite framework defined as follows.

Let $\tilde{G}=(\tilde{V},\tilde{E})$ be a $k$-periodic graph with periodicity $\Gamma$,
and let  $L:\Gamma \rightarrow \mathbb{R}^d$ be a nonsingular homomorphism with $k\leq d$,
where $L$ is said to be nonsingular if $L(\Gamma)$ has rank $k$.
A pair $(\tilde{G},\tilde{p})$ of $\tilde{G}$ and $\tilde{p}:\tilde{V}\rightarrow \mathbb{R}^d$ is said to be an
{\em $L$-periodic framework} in $\mathbb{R}^d$ if  \begin{equation}
\label{eq:periodic_func}
\tilde{p}(v)+L(\gamma)=\tilde{p}(\gamma v) \qquad \text{for all } \gamma\in \Gamma \text{ and all } v\in \tilde{V}.
\end{equation}
We also say that a pair $(\tilde{G}, \tilde{p})$ is {\em $k$-periodic} in $\mathbb{R}^d$ if it is $L$-periodic for some nonsingular homomorphism $L:\Gamma\rightarrow \mathbb{R}^d$.
Note that the rank $k$ of the periodicity may be smaller than $d$.

An $L$-periodic framework $(\tilde{G}, \tilde{p})$ is {\em generic} if the set of coordinates is algebraically independent over the rationals modulo the ideal generated by the equations (\ref{eq:periodic_func}).

A {\em $\Gamma$-labelled framework} is defined to be a triple $(G,\psi, p)$ of a finite $\Gamma$-labelled graph $(G,\psi)$ and a map $p:V(G)\rightarrow \mathbb{R}^d$.
Given a nonsingular homomorphism $L:\Gamma\rightarrow \mathbb{R}^d$, the {\em covering} of $(G,\psi,p)$ is the $L$-periodic framework $(\tilde{G}, \tilde{p})$,
where $\tilde{G}$ is the covering of $G$ and $\tilde{p}$ is uniquely determined from $p$ by (\ref{eq:periodic_func}). $(G,\psi,p)$ is also called the {\em quotient $\Gamma$-labelled framework} of $(\tilde{G}, \tilde{p})$.

We say that a $\Gamma$-labelled framework $(G,\psi,p)$ is {\em generic} if  the set of coordinates in \(p\) is algebraically independent over the rationals.
Note that an $L$-periodic framework $(\tilde{G}, \tilde{p})$ is generic if and only if the quotient $(G,\psi,p)$ of $(\tilde{G}, \tilde{p})$ is generic.

\subsection{Periodic body-bar frameworks}\label{subsec:bb}

A $d$-dimensional \emph{body-bar framework}  consists of disjoint full-dimensional rigid bodies in $\mathbb{R}^d$ connected by disjoint bars, and may be considered as a special type of bar-joint framework, as we will describe below. The rigidity and flexibility of body-bar frameworks has been studied extensively (see e.g. \cite{cjw, rossbb, Tay, W1}), as they have important applications in fields such as engineering, robotics, materials science, and  biology. The underlying graph of a body-bar framework is a multi-graph $H=(V(H),E(H))$ with no loops, where each vertex in $V(H)$ corresponds to a rigid body, and each edge in $E(H)$ corresponds to a rigid bar. To represent a body-bar framework as a bar-joint framework, we extract the  \emph{body-bar graph} $G_H$ from the multi-graph $H$ as follows (see also \cite{T1}, for example). $G_H$ is the simple graph with vertex set $V_H$ and edge set $E_H$, where
\begin{itemize}
\item $V_H$ is the disjoint union of vertex sets $B_H^v$ for each $v\in V(H)$, with $B_H^v$ defined as $B_H^v=\{v_1,v_2, \ldots, v_{d+1}\} \cup \{v_e |\, e \in E(H) \textrm{ is incident to } v\}$;
\item $E_H= \big(\bigcup_{v\in V(H)} K(B_H^v) \big) \cup \{e'=u_ev_e|\, e=uv \in E(H)\}$, where $K(B_H^v)$ is the complete graph on $B_H^v$.
\end{itemize}
For each $v\in V(H)$, the vertices of $B_H^v$ induce a complete subgraph of $G_H$, which is referred to as the \emph{body associated with $v$}. A bar-joint framework $(G_H,p)$ with $p:V_H \to \mathbb{R}^d$ is called a \emph{body-bar realisation} of $H$ in $\mathbb{R}^d$. See Figure 1 for an example.

 \begin{figure}[h!]\label{example}
  \includegraphics[width=15cm]{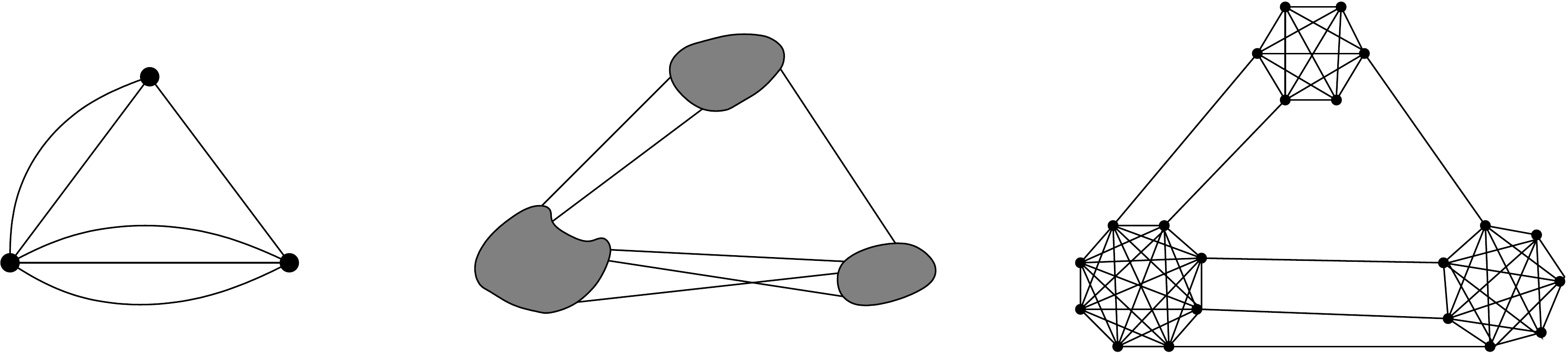}
  \caption{Example of a 2-dimensional multi-graph \(H\) (on the left) which is the underlying graph of the body-bar framework in the middle. On the right the graph \(G_H\) is shown.}
 \end{figure}


To define a periodic body-bar framework, we start with a $\Gamma$-labelled graph $(H,\psi)$, as defined in Section~\ref{subsec:gain}. However, we now allow $(H,\psi)$ to have loops with non-trivial labels, as well as parallel edges with equal labels when oriented in the same direction. Thus, $(H,\psi)$ defines  a \emph{$k$-periodic multi-graph} $\tilde{H}$ which has no loops but may have parallel edges. We now use the procedure described above to construct the $k$-periodic body-bar graph $G_{\tilde H}$ from the multi-graph $\tilde{H}$, with the slight modification that for any edge $e\in E(\tilde{H})$ joining a vertex $v$ with $\gamma v$ for some $\gamma \neq \textrm{id}$, we add two vertices $v_{e^-}$ and $v_{e^+}$ (instead of just one vertex $v_e$) to $B^v_{\tilde{H}}$,  and define $e'$ to be the edge $v_{e^-} \gamma v_{e^+}$ (instead of $v_{e} \gamma v_{e}$). This guarantees that the quotient $\Gamma$-labelled graph of the body-bar graph $G_{\tilde H}$ has no loops. 

 An $L$-periodic bar-joint framework $(G_{\tilde{H}},\tilde p)$ with $\tilde p:V_{\tilde{H}}\to \mathbb{R}^d$ is called an \emph{$L$-periodic body-bar realisation} of $\tilde H$ in $\mathbb{R}^d$.


\subsection{Rigidity and global rigidity}
Let $G=(V,E)$ be a graph.
Two bar-joint frameworks $(G,p)$ and $(G,q)$ in $\mathbb{R}^d$ are said to be \emph{equivalent} if
\begin{equation*}
\label{eq:periodic_equiv}
\left\| p(u)-p(v)\right\|=\left\| q(u)-q(v)\right\|\qquad \text{for all } uv\in E.
\end{equation*}
They are \emph{congruent} if
\begin{equation*}
\label{eq:periodic_congr}
\left\| p(u)-p(v)\right\|=\left\| q(u)- q(v)\right\|\qquad \text{for all } u,v\in V.
\end{equation*}
A bar-joint framework $(G,p)$ is called {\em globally rigid} if every framework $(G,q)$ in $\mathbb{R}^d$ which is equivalent to $(G,p)$ is also congruent to $(G,p)$.

Analogously, following \cite{kst}, we define an $L$-periodic bar-joint framework $(\tilde{G}, \tilde{p})$  in $\mathbb{R}^d$ to be {\em $L$-periodically globally rigid} if every $L$-periodic framework in $\mathbb{R}^d$ which is equivalent to $(\tilde{G}, \tilde{p})$ is also congruent to $(\tilde{G}, \tilde{p})$. Note that if the rank of the periodicity is equal to zero, then $L$-periodic global rigidity coincides with the global rigidity of finite frameworks.

A key notion to analyse $L$-periodic global rigidity is $L$-periodic {\em  rigidity}.  A framework $(\tilde{G},\tilde{p})$ is called {\em $L$-periodically rigid} if there is an open neighborhood $N$ of $\tilde{p}$ in which  every $L$-periodic framework $(\tilde{G},\tilde{q})$ which is equivalent to $(\tilde{G},\tilde{p})$ is also congruent to $(\tilde{G},\tilde{p})$. If $(\tilde{G},\tilde{p})$ is not $L$-periodically rigid, then it is called {\em $L$-periodically flexible}.


 A bar-joint framework $(\tilde{G},\tilde{p})$ is called {\em $L$-periodically vertex-redundantly rigid}, or {\em $L$-periodically 2-rigid} in short, if for every vertex orbit \(\tilde v\) of \(\tilde{G}\), the framework $(\tilde{G}-\tilde{v},\tilde{p}|_{V(\tilde{G})-\tilde{v}})$ is $L$-periodically rigid.

\subsection{Characterisation of $L$-periodic rigidity}

A key tool to analyse the rigidity or global rigidity of finite frameworks is the length-squared function and its Jacobian, called the rigidity matrix.
We may use the same approach to analyse periodic rigidity or periodic global rigidity (see also \cite{kst}).

For a $\Gamma$-labelled graph $(G,\psi)$ and $L:\Gamma\rightarrow \mathbb{R}^d$, define
$f_{G,L}:\mathbb{R}^{d|V(G)|}\rightarrow \mathbb{R}^{|E(G)|}$ by
\[f_{G,L}(p)=(\dots,\left\|p(v_i)-(p(v_j)+L(\psi(v_iv_j)))\right\|^2,\dots) \qquad (p\in \mathbb{R}^{d|V(G)|}).\]
For a finite set $V$, the {\em complete $\Gamma$-labelled graph} $K(V,\Gamma)$ on $V$ is defined
to be the graph on $V$ with the edge set $\{(u, \gamma v): u,v \in V, \gamma\in \Gamma\}$.
We simply denote $f_{K(V,\Gamma), L}$ by $f_{V,L}$.

By (\ref{eq:periodic_func}) we have the following fundamental fact.
\begin{proposition}
\label{prop:trivial}
Let $(\tilde{G}, \tilde{p})$ be an $L$-periodic framework and let $(G, \psi, p)$ be a quotient $\Gamma$-labelled framework of $(\tilde{G}, \tilde{p})$. Then $(\tilde{G}, \tilde{p})$ is $L$-periodically globally rigid (resp.~rigid) if and only if for every $q\in \mathbb{R}^{d|V(G)|}$ (resp.~for every $q$ in an open neighborhood of $p$ in $\mathbb{R}^{d|V(G)|}$), $f_{G,L}(p)=f_{G,L}(q)$ implies $f_{V(G),L}(p)=f_{V(G), L}(q)$.
\end{proposition}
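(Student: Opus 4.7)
The proposition is essentially a bookkeeping translation between the periodic framework $(\tilde G, \tilde p)$ and its quotient $(G,\psi,p)$, so my plan is to unpack the periodicity identity (\ref{eq:periodic_func}) carefully and show that the equivalence relation on $L$-periodic frameworks pulls back to the condition $f_{G,L}(p)=f_{G,L}(q)$, while the congruence relation pulls back to $f_{V(G),L}(p)=f_{V(G),L}(q)$. The global and (local) rigidity statements then follow directly from the definitions.

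First, I would observe that (\ref{eq:periodic_func}) with $L$ fixed gives a bijection between $L$-periodic realisations $\tilde p$ of $\tilde G$ and maps $p:V(G)\to\mathbb{R}^d$, namely $\tilde p(\gamma v)=p(v)+L(\gamma)$. Moreover this bijection is a linear isomorphism onto $\mathbb{R}^{d|V(G)|}$, so open neighbourhoods of $\tilde p$ in the space of $L$-periodic realisations correspond to open neighbourhoods of $p$ in $\mathbb{R}^{d|V(G)|}$; this takes care of the ``local versus global'' distinction in one stroke. Next, for any edge of $\tilde G$ of the form $\{\gamma u,\psi(uv)\gamma v\}$ (arising from $(u,v)\in E(G)$ with label $\psi(uv)$), the periodicity identity yields
\[
\|\tilde p(\gamma u)-\tilde p(\psi(uv)\gamma v)\|^2=\|p(u)-p(v)-L(\psi(uv))\|^2,
\]
which is independent of $\gamma$. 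Hence distance preservation along all edges of $\tilde G$ is equivalent to distance preservation along one representative per edge orbit, i.e.\ to $f_{G,L}(p)=f_{G,L}(q)$.

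Then for congruence I would write arbitrary vertices of $\tilde G$ as $\alpha u$ and $\beta v$ with $u,v\in V(G)$ and apply (\ref{eq:periodic_func}) to get
\[
\|\tilde p(\alpha u)-\tilde p(\beta v)\|^2=\|p(u)-p(v)-L(\beta\alpha^{-1})\|^2,
\]
so as $\alpha,\beta$ range over $\Gamma$ the argument $\gamma:=\beta\alpha^{-1}$ ranges over all of $\Gamma$. Thus preservation of all pairwise distances in $\tilde G$ is equivalent to preservation of $\|p(u)-p(v)-L(\gamma)\|^2$ for all $u,v\in V(G)$ and all $\gamma\in\Gamma$, which is precisely $f_{V(G),L}(p)=f_{V(G),L}(q)$ since the complete $\Gamma$-labelled graph $K(V,\Gamma)$ has edge set $\{(u,\gamma v):u,v\in V,\gamma\in\Gamma\}$.

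Combining these two translations with the definition of ($L$-periodic, resp.\ global) rigidity yields both directions of the proposition immediately: the global version follows from the ``for every $q$'' clause, and the local version follows from the ``$q$ in a neighbourhood of $p$'' clause together with the fact that the bijection $\tilde p\leftrightarrow p$ is a homeomorphism. There is no real obstacle here beyond careful bookkeeping; the only point that deserves a sentence of justification is that the congruence condition, stated for \emph{all} pairs in $\tilde V$, collapses to a condition indexed by $V(G)\times V(G)\times \Gamma$ because of translation invariance in the $\Gamma$-action, which is why $f_{V(G),L}$ (rather than some larger function) captures it exactly.
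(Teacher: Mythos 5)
Your proof is correct, and it supplies exactly the verification that the paper leaves implicit: the authors state the proposition as a ``fundamental fact'' that follows from equation~(\ref{eq:periodic_func}) and give no explicit argument. Your unpacking of the bijection $\tilde p \leftrightarrow p$, the computation $\tilde p(\gamma u)-\tilde p(\psi(uv)\gamma v)=p(u)-p(v)-L(\psi(uv))$ for edges, and the observation that for arbitrary vertices $\|\tilde p(\alpha u)-\tilde p(\beta v)\|=\|p(u)-p(v)-L(\beta\alpha^{-1})\|$ with $\beta\alpha^{-1}$ ranging over all of $\Gamma$ are precisely the bookkeeping the paper intends the reader to carry out. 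Your remark at the end --- that translation invariance of the $\Gamma$-action collapses the congruence condition to one indexed by $V(G)\times V(G)\times\Gamma$, which is why $f_{V(G),L}$ captures it --- is the one genuinely non-obvious point, and you have identified and justified it correctly.
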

We may therefore say that a $\Gamma$-labelled framework $(G,\psi,p)$ is {\em $L$-periodically globally rigid (or rigid)} if for every $q\in \mathbb{R}^{d|V(G)|}$ (resp. for every $q$ in an open neighborhood of $p$ in $\mathbb{R}^{d|V(G)|}$), $f_{G,L}(p)=f_{G,L}(q)$ implies $f_{V(G),L}(p)=f_{V(G), L}(q)$,
and we may focus on characterising the $L$-periodic global rigidity (or rigidity) of $\Gamma$-labelled frameworks.  If  $(G,\psi,p)$ is not $L$-periodically rigid, then it is called {\em $L$-periodically flexible}. A $\Gamma$-labelled framework $(G,\psi,p)$ is {\em $L$-periodically $2$-rigid} if for every vertex $v$ of $G$, the $\Gamma$-labelled framework $(G-v,\psi|_{G-v},p|_{V(G)-v})$ is $L$-periodically rigid.


We have the following basic result for analysing $L$-periodic rigidity.
\begin{theorem}[\cite{rossd}, \cite{kst}]
\label{prop:rigidity_matrix}
Let $(G,\psi, p)$ be a  generic $\Gamma$-labelled framework in $\mathbb{R}^d$
with $|V(G)|\geq d+1$ and rank $k$ periodicity $\Gamma$, and let $L:\Gamma\rightarrow \mathbb{R}^d$ be nonsingular.
Then $(G,\psi,p)$ is $L$-periodically rigid if and only if
\[
\rank df_{G,L}|_p=d|V(G)|-d-{d-k\choose 2},
\]
where $df_{G,L}|_p$ denotes the Jacobian of $f_{G,L}$ at $p$.
\end{theorem}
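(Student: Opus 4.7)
The argument is an adaptation of the Asimow--Roth framework to $L$-periodic frameworks. Write $M(p):=df_{G,L}|_p$ and $T(p):=df_{V(G),L}|_p$, so that $T(p)$ is the rigidity matrix of the complete $\Gamma$-labelled graph on $V(G)$. Since every edge of $G$ is an edge of $K(V(G),\Gamma)$, one always has $\ker T(p)\subseteq \ker M(p)$, and equality of these kernels is what one calls \emph{infinitesimal $L$-periodic rigidity}. The proof breaks into two halves: first compute $\rank T(p)$ for generic $p$, and then show that generically, infinitesimal $L$-periodic rigidity coincides with $L$-periodic rigidity.

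For the first half, observe that $\dot p\in\ker T(p)$ iff $(\dot p(u)-\dot p(v))\cdot(p(u)-p(v)-L(\gamma))=0$ for all $u,v\in V(G)$ and all $\gamma\in\Gamma$. Lifting to the cover, this says exactly that the associated velocity field on $\tilde p$ is the restriction of an infinitesimal Euclidean motion $x\mapsto Sx+t$ (with $S^{\top}=-S$) that commutes with translation by every $L(\gamma)$. Commutation forces $SL(\gamma)=0$ for all $\gamma$, so $S$ vanishes on the rank-$k$ subspace $\mathrm{span}\,L(\Gamma)$; skew-symmetric matrices with this property form a space of dimension $\binom{d-k}{2}$, while $t$ contributes another $d$. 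Provided $p$ is generic and $|V(G)|\ge d+1$, the points $p(V(G))$ are in affine general position, so the map $(S,t)\mapsto(Sp(v)+t)_{v\in V(G)}$ is injective. Hence $\dim\ker T(p)=d+\binom{d-k}{2}$, equivalently $\rank T(p)=d|V(G)|-d-\binom{d-k}{2}$.

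For the second half, I would apply the standard smooth-manifold argument. The rank of $df_{G,L}$ is lower semi-continuous and attains its generic maximum on a Zariski-open set, so at a generic $p$ it is locally constant. By the constant rank theorem, $\Sigma_p:=f_{G,L}^{-1}(f_{G,L}(p))$ is, in a neighbourhood of $p$, a smooth submanifold of dimension $d|V(G)|-\rank M(p)$. The same argument applied to $f_{V(G),L}$ shows that $\Sigma'_p:=f_{V(G),L}^{-1}(f_{V(G),L}(p))$ is locally a smooth submanifold of dimension $d+\binom{d-k}{2}$, and by definition $\Sigma'_p\subseteq\Sigma_p$. By Proposition~\ref{prop:trivial}, $(G,\psi,p)$ is $L$-periodically rigid iff $\Sigma_p=\Sigma'_p$ locally near $p$; because both are smooth submanifolds with one contained in the other, this is equivalent to the equality of their dimensions, yielding $\rank M(p)=d|V(G)|-d-\binom{d-k}{2}$.

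The main obstacle is the passage from infinitesimal to continuous rigidity at generic configurations, which rests on the Zariski-openness of the maximal-rank locus combined with the constant-rank theorem. A subtler point is the injectivity used in computing $\dim\ker T(p)$: one must verify that no non-trivial infinitesimal lattice-preserving isometry acts trivially on the finite orbit $p(V(G))$, which is where the hypothesis $|V(G)|\ge d+1$ together with genericity enters essentially. Once these two ingredients are in place, the rank formula follows in the form stated.
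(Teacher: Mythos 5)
The paper does not prove Theorem~\ref{prop:rigidity_matrix}; it is stated as a citation to \cite{rossd} and \cite{kst}, so there is no in-paper argument to compare against. That said, your reconstruction follows the standard Asimow--Roth route adapted to periodic frameworks, which is the approach taken in those references, and the two halves are organised correctly: first identify $\ker df_{V(G),L}|_p$ with lattice-compatible infinitesimal isometries to obtain the count $d+\binom{d-k}{2}$, then pass from infinitesimal to local rigidity via lower-semicontinuity of rank and the constant-rank theorem, using Proposition~\ref{prop:trivial} to translate ``rigid'' into equality of local fibers.

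Two points deserve more care than the sketch gives them. First, $K(V,\Gamma)$ has infinitely many edges, so $f_{V(G),L}$ maps into an infinite product; applying the constant-rank theorem to it, and speaking of ``the'' dimension of its fiber, requires either a reduction to a finite generating set of constraints or a switch to the pinned map $\hat f_{G,L}$ that the paper introduces before Lemma~\ref{prop:rigidity_matrix2}. The paper's subsequent lemmas (\ref{prop:rigidity_matrix2}, \ref{lem:redrig}) work with $\hat f_{G,L}$ precisely to sidestep this, and this is where the hypothesis $|V(G)|\ge d+1$ also enters cleanly (it guarantees the pinned map is locally injective on the trivial-motion directions). Second, in the kernel computation you should separate the two families of constraints from the complete gain graph: taking $\gamma=\mathrm{id}$ gives the usual Euclidean condition $\langle p(u)-p(v),\dot p(u)-\dot p(v)\rangle=0$, while differencing against other $\gamma$ gives $\langle L(\gamma),\dot p(u)-\dot p(v)\rangle=0$, which is what forces $SL(\gamma)=0$ after you know $\dot p(v)=Sp(v)+t$; this is implicit in your ``commutation'' step but worth writing out, since it is also the place where genericity and $|V(G)|\ge d+1$ ensure the differences $p(u)-p(v)$ span $\mathbb{R}^d$. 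With these details filled in the argument is sound and matches the cited proofs.
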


For combinatorial characterisations of generic $L$-periodically rigid or globally rigid $\Gamma$-labelled frameworks in $\mathbb{R}^2$,  we refer the reader to \cite{ross2,kst} and \cite{kst}, respectively.
A combinatorial characterisation of generic $L$-periodically rigid body-bar frameworks in $\mathbb{R}^d$ has been established in \cite{T2} (see also Theorem~\ref{thm:bodybarrig}).

\section{Rigidity implies global rigidity for small graphs}
\label{sec:rigidglob}

We first prove the following periodic counterpart of \cite[Lemma 1]{kneser}.

\begin{lemma}\label{lem:genkneser}
Let $p$ and $q$ be two $\mathbb{Z}^k$-periodic realisations of $n$ points $v_1,\dots, v_n$ in $\mathbb{R}^d$ with the same lattice
$L:\mathbb{Z}^k\rightarrow \mathbb{R}^d$. Furthermore, let $p(\gamma v_i)=L(\gamma)+p(v_i)=p_{\gamma, i}$ and $q(\gamma v_i)=L(\gamma)+q(v_i)=q_{\gamma, i}$ for $i=1,\dots, n$ and $\gamma\in \mathbb{Z}^k$. Let $p_{\gamma, i}:[0,1]\to \mathbb{R}^{2d}$ be the following continuous maps for $i=1,\dots, n$:
\begin{equation}
p_{\gamma,i}(t)=\left(\frac{p_{\gamma,i}+q_{\gamma,i}}{2}+(\cos (\pi t))\frac{p_{\gamma,i}-q_{\gamma,i}}{2},(\sin (\pi t))\frac{p_{\gamma,i}-q_{\gamma,i}}{2}\right).
\end{equation}
Then $p_{\gamma,i}(0)=(p_{\gamma,i},0^d)$ and $p_{\gamma,i}(1)=(q_{\gamma,i},0^d)$, where $0^d$ denotes the $d$-dimensional zero vector. Further, $|p_{\gamma,i}(t)-p_{\gamma',j}(t)|$ is monotone and $p_{\gamma,i}(t)=p_{0,i}(t)+(L(\gamma),0^d)$ for every $i,j\in\{1,\dots,n\}$ and $\gamma,\gamma'\in \mathbb{Z}^k$.
\end{lemma}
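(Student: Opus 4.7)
My plan is to verify each of the three claimed properties by direct substitution into the defining formula for $p_{\gamma,i}(t)$. The lemma is essentially a direct-computation lemma, so there is no deep obstacle; the point is simply to check that the classical Bezdek--Connelly construction respects the lattice translation, and that pairwise distances remain monotone even when the two endpoints of an edge belong to different vertex orbits.

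\textbf{Boundary values.} Substituting $t=0$ gives $\cos(\pi t)=1$ and $\sin(\pi t)=0$, so
\[
p_{\gamma,i}(0)=\left(\frac{p_{\gamma,i}+q_{\gamma,i}}{2}+\frac{p_{\gamma,i}-q_{\gamma,i}}{2},\,0^d\right)=(p_{\gamma,i},0^d),
\]
and analogously $p_{\gamma,i}(1)=(q_{\gamma,i},0^d)$ since $\cos(\pi)=-1$, $\sin(\pi)=0$.

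\textbf{Periodicity.} Writing $p_{\gamma,i}=L(\gamma)+p_{0,i}$ and $q_{\gamma,i}=L(\gamma)+q_{0,i}$, the midpoint $\frac{p_{\gamma,i}+q_{\gamma,i}}{2}$ differs from $\frac{p_{0,i}+q_{0,i}}{2}$ by exactly $L(\gamma)$, while the half-difference $\frac{p_{\gamma,i}-q_{\gamma,i}}{2}$ is independent of $\gamma$. Plugging this into the definition gives $p_{\gamma,i}(t)=p_{0,i}(t)+(L(\gamma),0^d)$, as required.

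\textbf{Monotonicity.} Set $a=p_{\gamma,i}-p_{\gamma',j}$ and $b=q_{\gamma,i}-q_{\gamma',j}$. Then
\[
p_{\gamma,i}(t)-p_{\gamma',j}(t)=\left(\tfrac{a+b}{2}+\cos(\pi t)\tfrac{a-b}{2},\,\sin(\pi t)\tfrac{a-b}{2}\right).
\]
Expanding the squared norm and using $\sin^2+\cos^2=1$ together with $\langle a+b,a-b\rangle=|a|^2-|b|^2$ and $|\tfrac{a+b}{2}|^2+|\tfrac{a-b}{2}|^2=\tfrac{|a|^2+|b|^2}{2}$ yields
\[
|p_{\gamma,i}(t)-p_{\gamma',j}(t)|^2=\frac{|a|^2+|b|^2}{2}+\cos(\pi t)\cdot\frac{|a|^2-|b|^2}{2}.
\]
Since $\cos(\pi t)$ is monotone on $[0,1]$, so is the right-hand side, and note that it equals $|a|^2$ at $t=0$ and $|b|^2$ at $t=1$, giving a monotone interpolation of the squared distances. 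This completes the verification of all three properties.

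The only subtlety worth flagging is that $\gamma$ and $\gamma'$ may differ, so the argument is not merely a quotient-graph computation; however, because the $+(L(\gamma),0^d)$ shift in the first coordinate cancels inside $a$ and $b$ in exactly the same way for $p$ and $q$, the computation goes through unchanged. Thus no genericity or rigidity hypothesis is needed here; the lemma is a purely kinematic statement about the lifted motion in $\mathbb{R}^{2d}$.
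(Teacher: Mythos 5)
Your proof is correct and, for the part the paper actually argues (the lattice--translation identity $p_{\gamma,i}(t)=p_{0,i}(t)+(L(\gamma),0^d)$), it is essentially the same computation as in the paper. Where you differ is scope: the paper cites \cite[Lemma~1]{kneser} for the boundary values $p_{\gamma,i}(0)=(p_{\gamma,i},0^d)$, $p_{\gamma,i}(1)=(q_{\gamma,i},0^d)$, and for the monotonicity of $|p_{\gamma,i}(t)-p_{\gamma',j}(t)|$, whereas you reproduce the Bezdek--Connelly computation in full, arriving at the clean identity
\[
|p_{\gamma,i}(t)-p_{\gamma',j}(t)|^2=\frac{|a|^2+|b|^2}{2}+\cos(\pi t)\,\frac{|a|^2-|b|^2}{2},
\]
from which monotonicity is immediate. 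This buys a self-contained proof at little extra cost and makes transparent why the argument survives the passage to infinitely many (periodically translated) points: the formula holds for an arbitrary pair $(a,b)$, so the lattice shift is harmless. One small imprecision in your closing remark: the $L(\gamma)$ and $L(\gamma')$ shifts do not \emph{cancel} inside $a$ or inside $b$ individually (each of $a$ and $b$ acquires the same offset $L(\gamma)-L(\gamma')$); what matters, as your computation itself shows, is that the monotonicity statement holds for any $a,b\in\mathbb{R}^d$ and hence needs no cancellation at all.
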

\begin{proof}
We only prove the last equation as the other statements follow directly from \cite[Lemma 1]{kneser}. Observe that
\begin{eqnarray*}
p_{\gamma,i}(t)&=&\left(\frac{p_{\gamma,i}+q_{\gamma,i}}{2}+(\cos (\pi t))\frac{p_{\gamma,i}-q_{\gamma,i}}{2},(\sin (\pi t))\frac{p_{\gamma,i}-q_{\gamma,i}}{2}\right)\\&=&
\left(\frac{p_{0,i}+L(\gamma)+q_{0,i}+L(\gamma)}{2}+(\cos (\pi t))\frac{p_{0,i}+L(\gamma)-(q_{0,i}+L(\gamma))}{2},\right.\\& &\left. (\sin (\pi t))\frac{p_{0,i}+L(\gamma)-(q_{0,i}+L(\gamma))}{2}\right)
\\&=&\left(\frac{p_{0,i}+q_{0,i}}{2}+L(\gamma)+(\cos (\pi t))\frac{p_{0,i}-q_{0,i}}{2},(\sin (\pi t))\frac{p_{0,i}-q_{0,i}}{2}\right)\\&=&
p_{0,i}(t)+(L(\gamma),0^d)
\end{eqnarray*}
holds for every  $i\in\{1,\dots,n\}$ and $\gamma\in \mathbb{Z}^k$.
\end{proof}

Lemma \ref{lem:genkneser} implies the following theorem.

\begin{theorem}\label{thm:nonglobnonrig}
Let $L:\Gamma\rightarrow \mathbb{R}^d$ be a lattice and let $(G,\psi,p)$ be a $\Gamma$-labelled framework in $\mathbb{R}^d$ which is not $L$-periodically globally rigid. Then the framework $(G,\psi,(p,0^d))$ is  $(L,0^d)$-periodically flexible in $\mathbb{R}^{2d}$, where $(L,0^d):\Gamma\rightarrow \mathbb{R}^{2d}$ maps $\gamma\in \Gamma$ to $(L(\gamma),0^d)$.
\end{theorem}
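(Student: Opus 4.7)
The plan is to convert the witness of non-global-rigidity into an $(L,0^d)$-periodic continuous motion of $(G,\psi,(p,0^d))$ in $\mathbb{R}^{2d}$ via Lemma \ref{lem:genkneser}. Since $(G,\psi,p)$ is not $L$-periodically globally rigid, there exists $q\in\mathbb{R}^{d|V(G)|}$ with $f_{G,L}(p)=f_{G,L}(q)$ but $f_{V(G),L}(p)\ne f_{V(G),L}(q)$. Extend $p$ and $q$ to $L$-periodic realisations of the vertex orbits by setting $p_{\gamma,i}=p(v_i)+L(\gamma)$ and $q_{\gamma,i}=q(v_i)+L(\gamma)$, and apply Lemma \ref{lem:genkneser} to obtain continuous trajectories $p_{\gamma,i}\colon[0,1]\to\mathbb{R}^{2d}$. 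Let $r(t)\in\mathbb{R}^{2d|V(G)|}$ denote the realisation sending $v_i$ to $p_{0,i}(t)$.

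The next step is to verify that $r(t)$ defines a continuous $(L,0^d)$-periodic motion of $(G,\psi,(p,0^d))$ that preserves every edge length. Periodicity is immediate from the identity $p_{\gamma,i}(t)=p_{0,i}(t)+(L(\gamma),0^d)$ in the lemma, which is exactly the $(L,0^d)$-periodicity constraint. For an edge $v_iv_j$ of $G$ with label $\gamma$, the squared length of the corresponding edge at time $t$ is $|p_{0,i}(t)-p_{\gamma,j}(t)|^2$, which is monotone in $t$ by the lemma. Since $p$ and $q$ are equivalent as $L$-periodic frameworks, this squared length coincides at $t=0$ and $t=1$, and monotonicity then forces it to be constant on $[0,1]$. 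Hence $(G,\psi,r(t))$ is equivalent to $(G,\psi,(p,0^d))$ for every $t\in[0,1]$.

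To finish, I would use $f_{V(G),L}(p)\ne f_{V(G),L}(q)$ to pick indices $i,j$ and $\gamma\in\Gamma$ with $|p_{0,i}-p_{\gamma,j}|\ne|q_{0,i}-q_{\gamma,j}|$. For this pair the monotone function $t\mapsto|p_{0,i}(t)-p_{\gamma,j}(t)|$ has distinct endpoint values, hence is strictly monotone, and therefore differs from its initial value at every sufficiently small $t>0$. In particular $(G,\psi,r(t))$ is equivalent but not congruent to $(G,\psi,(p,0^d))$ for every such $t$, while continuity of the trajectories places $r(t)$ in any prescribed open neighbourhood of $(p,0^d)$. This is precisely the definition of $(L,0^d)$-periodic flexibility.

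The substantive ingredient is Lemma \ref{lem:genkneser}; once that is available the argument reduces to carefully translating the $\Gamma$-labelled notions of equivalence and congruence from $\mathbb{R}^d$ to $\mathbb{R}^{2d}$ under the embedding $x\mapsto(x,0^d)$. The one mildly delicate point is the need for strict (rather than merely weak) monotonicity on $[0,1]$ so that the motion is nontrivial in every right-neighbourhood of $t=0$; this is guaranteed by the explicit form $A+B\cos(\pi t)$ of the squared distance that appears in the proof of the lemma, with $B\ne 0$ precisely when the endpoint values differ.
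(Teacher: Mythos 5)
Your proposal is correct and follows essentially the same route as the paper: both proofs take an equivalent-but-not-congruent witness $q$, feed the pair $(p,q)$ into Lemma~\ref{lem:genkneser} to get the explicit trigonometric path in $\mathbb{R}^{2d}$, use the periodicity identity $p_{\gamma,i}(t)=p_{0,i}(t)+(L(\gamma),0^d)$ to keep the lattice fixed, and use monotonicity together with equal endpoint values to conclude that bar lengths are constant along the motion. The only difference is that you spell out the last step (strict monotonicity of $A+B\cos(\pi t)$ when $B\neq 0$ gives non-congruence at every small $t>0$), which the paper leaves implicit in ``this map proves that $(G,\psi,(p,0^d))$ is $(L,0^d)$-periodically flexible''; your added care here is warranted given that the paper's definition of flexibility is the neighbourhood one rather than the continuous-path one.
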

\begin{proof}
Let $(G,\psi,q)$ be an $L$-periodic framework which is equivalent but not congruent to $(G,\psi,p)$. By Lemma \ref{lem:genkneser}, there exists a continuous deformation between these two frameworks in $\mathbb{R}^{2d}$ that maintains the lattice and, by the monotonicity of the distances, also maintains the bar lengths. Therefore, this map proves that $(G,\psi,(p,0^d))$ is  $(L,0^d)$-periodically flexible.
\end{proof}

Let $(G,\psi,p)$ be an $L$-periodic framework in $\mathbb{R}^d$ with rank $k$ periodicity and with $|V(G)|\leq d-k+1$. Observe that $(G,\psi,q)$ in $\mathbb{R}^D$ has affine span of dimension at most $|V(G)|+k-1\leq d$ for $D\geq d$ with $(L,0^{D-d})$-periodicity. Now suppose that $(G,\psi,p)$ is $L$-periodically rigid in $\mathbb{R}^d$. Then it also has to be $L$-periodically globally rigid in $\mathbb{R}^d$ as during its continuous motion in $\mathbb{R}^{2d}$ the framework spans an at most \(d\)-dimensional subspace. Hence we have the following corollary of Theorem \ref{thm:nonglobnonrig}.

\begin{corollary}\label{cor:smallrigidglobrigid}
Let $(G,\psi,p)$ be a  $\Gamma$-labelled framework in $\mathbb{R}^d$ with rank $k$ periodicity and $L:\Gamma\rightarrow \mathbb{R}^d$. Suppose that $(G,\psi,p)$ is  $L$-periodically rigid and $|V(G)|\leq d-k+1$. Then $(G,\psi,p)$ is also $L$-periodically globally rigid.
\end{corollary}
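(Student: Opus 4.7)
I would argue by contradiction using Theorem~\ref{thm:nonglobnonrig}. Suppose $(G,\psi,p)$ is $L$-periodically rigid but not $L$-periodically globally rigid in $\mathbb{R}^d$. Then there is an $L$-periodic realisation $(G,\psi,q)$ equivalent but not congruent to $(G,\psi,p)$. By Theorem~\ref{thm:nonglobnonrig}, the lifted framework $(G,\psi,(p,0^d))$ is $(L,0^d)$-periodically flexible in $\mathbb{R}^{2d}$, and Lemma~\ref{lem:genkneser} in fact provides an explicit continuous flex $t\mapsto p(t)$ joining $(p,0^d)$ to $(q,0^d)$ while preserving edge lengths and the lattice $(L,0^d)$.

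\textbf{Dimensional observation and straightening.} Since $|V(G)|\le d-k+1$, at every time $t$ the affine span $A(t)\subset \mathbb{R}^{2d}$ of the covering of $(G,\psi,p(t))$ has dimension at most $(|V(G)|-1)+k\le d$ and contains the fixed $k$-dimensional subspace $V:=\spa L(\Gamma)\times \{0^d\}$. I would then construct a continuous family of $(L,0^d)$-periodic isometries $\phi_t\colon \mathbb{R}^{2d}\to\mathbb{R}^{2d}$ (i.e.\ isometries whose orthogonal parts fix $V$ pointwise, so as to commute with the $(L,0^d)$-action) with $\phi_0=\id$ and $\phi_t(A(t))\subset \mathbb{R}^d\times \{0^d\}$ for every $t$. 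Modding out by $V$, this amounts to rotating a continuously varying affine subspace of dimension at most $d-k$ inside $V^\perp\cong\mathbb{R}^{2d-k}$ onto the fixed subspace $\mathbb{R}^{d-k}\times \{0^d\}$; such a family can be obtained by a standard Gram--Schmidt/Stiefel lifting, starting at the identity.

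\textbf{Deriving the contradiction.} The composed path $t\mapsto \phi_t(p(t))$ then lies entirely in $\mathbb{R}^d\times \{0^d\}$, preserves all edge lengths, and respects the lattice, so discarding the trailing zero coordinates yields an $L$-periodic continuous deformation of $(G,\psi,p)$ in $\mathbb{R}^d$. Its endpoint is the image of $q$ under an isometry of $\mathbb{R}^d$ that fixes $L(\Gamma)$, and is therefore equivalent to, but (since $q$ is not congruent to $p$) not congruent to $(G,\psi,p)$ in $\mathbb{R}^d$. This contradicts the $L$-periodic rigidity of $(G,\psi,p)$.

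\textbf{Main obstacle.} The routine parts are the dimension count and the appeal to Theorem~\ref{thm:nonglobnonrig}; the only delicate step is the continuous construction of the family $\phi_t$. One must ensure continuity of the straightening even at times where $\dim A(t)$ drops, which is the reason to work in the orthogonal complement $V^\perp$ and use a Stiefel-frame lifting rather than picking isometries pointwise.
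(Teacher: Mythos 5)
Your argument follows exactly the paper's route: the paper proves the corollary by invoking Theorem~\ref{thm:nonglobnonrig} and then observing that, because $|V(G)|\le d-k+1$, the Bezdek--Connelly flex in $\mathbb{R}^{2d}$ spans an affine subspace of dimension at most $|V(G)|+k-1\le d$ at every instant, so the motion can be carried out in $\mathbb{R}^d$ and contradicts $L$-periodic rigidity; the paper in fact states only this dimension count and asserts the conclusion without spelling out the straightening.

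Your version supplies the detail the paper elides, and you correctly single out the continuity of $\phi_t$ as the only non-routine point. One caveat on that point: at times where $\dim A(t)$ drops, $t\mapsto A(t)$ is \emph{not} continuous in the Grassmannian, so a naive Gram--Schmidt applied to the (possibly degenerate) frame $u_i(t)$ does not automatically give a continuous lift, and simply ``working in $V^\perp$'' does not by itself fix this. A cleaner justification is to use that the Bezdek--Connelly path $t\mapsto p(t)$ (and hence the relevant Gram matrix of the covering configuration) is real-analytic in $t$ with rank bounded by $d$, so one can invoke analytic diagonalisation to obtain an analytic rank-$\le d$ factorisation and read off a continuous $L$-periodic realisation in $\mathbb{R}^d$; alternatively, restrict attention to a small interval $[0,\varepsilon]$ and use the monotonicity of the pairwise distances to see that the resulting realisations near $p$ are equivalent but not congruent. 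This is a refinement rather than a correction, since the paper leaves the same step implicit.
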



\section{2-Rigidity implies global rigidity}
\label{sec:2rigid}

In this section we extend Theorem~\ref{thm2rig} to periodic frameworks by showing that $L$-periodic $2$-rigidity, together with a rank condition on the $\Gamma$-labelled graph in the case when the framework is $d$-periodic in $\mathbb{R}^d$, implies $L$-periodic global rigidity. We need the following lemmas.

\begin{lemma}[\cite{jjt}]
\label{lem:pq}
Let $f:\mathbb{R}^d\rightarrow \mathbb{R}^k$ be a polynomial map with rational coefficients
and $p$ be a generic point in $\mathbb{R}^d$.
Suppose that $df|_p$ is nonsingular.
Then for every $q\in f^{-1}(p)$  we have $\overline{\mathbb{Q}(p)}=\overline{\mathbb{Q}(q)}$.
\end{lemma}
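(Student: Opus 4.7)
The plan is to pin down both $\overline{\mathbb{Q}(p)}$ and $\overline{\mathbb{Q}(q)}$ as the common algebraic closure of $\mathbb{Q}(f(p))$. A preliminary remark: I read the notation $f^{-1}(p)$ in the statement as $f^{-1}(f(p))$, since $p$ lives in the domain $\mathbb{R}^d$ and not in the codomain $\mathbb{R}^k$; this is the meaning of the lemma in the source from which it is cited.

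First I would show that $p$ itself is algebraic over $\mathbb{Q}(f(p))$. Since $df|_p$ is nonsingular (of full column rank $d$), some $d\times d$ submatrix of $df|_p$ has nonzero determinant, so the corresponding $d$ output-coordinates of $f$ define a polynomial map $g\colon\mathbb{R}^d\to\mathbb{R}^d$ with rational coefficients and invertible Jacobian at $p$. By the Jacobian criterion, $p$ is an isolated zero of the $\mathbb{Q}(g(p))$-polynomial system $g(x)-g(p)=0$, so every coordinate of $p$ is algebraic over $\mathbb{Q}(g(p))\subseteq \mathbb{Q}(f(p))$. Combining this with the trivial inclusion $\mathbb{Q}(f(p))\subseteq\mathbb{Q}(p)$ (which holds because $f$ has rational coefficients) yields $\overline{\mathbb{Q}(p)}=\overline{\mathbb{Q}(f(p))}$, and since $p$ is generic, both fields have transcendence degree $d$ over $\mathbb{Q}$.

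Next, for an arbitrary $q$ with $f(q)=f(p)$, I would observe that the inclusion $\mathbb{Q}(f(p))=\mathbb{Q}(f(q))\subseteq \mathbb{Q}(q)$ forces $\mathrm{tr.deg}(\mathbb{Q}(q)/\mathbb{Q})\geq d$, while $q\in \mathbb{R}^d$ forces $\mathrm{tr.deg}(\mathbb{Q}(q)/\mathbb{Q})\leq d$. Hence $\mathbb{Q}(q)$ is algebraic over $\mathbb{Q}(f(p))$, and therefore $\overline{\mathbb{Q}(q)}=\overline{\mathbb{Q}(f(p))}=\overline{\mathbb{Q}(p)}$, as required.

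The delicate point will be the Jacobian-criterion step, namely passing from local invertibility of $g$ at $p$ to $p$ being algebraic over $\mathbb{Q}(g(p))$. The cleanest justification is via standard algebraic geometry: the fiber of $g$ over $g(p)$ is a $\mathbb{Q}(g(p))$-variety that is zero-dimensional at $p$ because $dg|_p$ is invertible, and every isolated point of a $K$-variety has coordinates algebraic over $K$. Alternatively, a direct elimination/resultant argument reduces $g(x)-g(p)=0$ to a nontrivial univariate polynomial over $\mathbb{Q}(g(p))$ satisfied by each coordinate of $p$. Beyond this, the rest of the proof is pure bookkeeping with field extensions and transcendence degrees.
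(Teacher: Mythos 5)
The paper does not prove this lemma; it cites it directly from Jackson--Jord\'an--Tanigawa~\cite{jjt} without supplying an argument, so there is no in-paper proof to compare against. Your reading of $f^{-1}(p)$ as $f^{-1}(f(p))$ is the correct interpretation, and your proof is sound and matches the standard argument used in the cited source: extract a $d\times d$ nonsingular minor of $df|_p$ to get a self-map $g$ of $\mathbb{R}^d$ whose Jacobian is invertible at $p$, deduce that $p$ is an isolated point of the $\mathbb{Q}(g(p))$-variety $g^{-1}(g(p))$ and hence algebraic over $\mathbb{Q}(g(p))\subseteq\mathbb{Q}(f(p))$, conclude $\overline{\mathbb{Q}(p)}=\overline{\mathbb{Q}(f(p))}$ with transcendence degree $d$, and finally use the sandwich $\mathbb{Q}(f(p))=\mathbb{Q}(f(q))\subseteq\mathbb{Q}(q)$ together with $\operatorname{tr.deg}(\mathbb{Q}(q)/\mathbb{Q})\le d$ to force $\overline{\mathbb{Q}(q)}=\overline{\mathbb{Q}(f(p))}=\overline{\mathbb{Q}(p)}$. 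The one step you flagged as delicate---that an isolated point of a variety defined over $K$ has coordinates in $\overline{K}$---is correctly justified by the Jacobian criterion (which gives isolation in the \emph{complex} fibre, not merely the real one) together with finiteness and Galois-stability of the set of isolated points; your elimination/resultant alternative is also fine, but only once one observes that isolation is precisely what makes the eliminant nontrivial.
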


Let $\Gamma$ be a group isomorphic to $\mathbb{Z}^k$, $t=\max\{d-k, 1\}$,
$(G,\psi)$ be  a $\Gamma$-labelled graph with $|V(G)|\geq t$, and  $L:\Gamma\rightarrow \mathbb{R}^d$ be nonsingular.
We pick any $t$ vertices $v_1,\dots, v_t$, and define the augmented function of $f_{G,L}$ by $\hat{f}_{G,L}:=(f_{G,L}, g)$, where $g:\mathbb{R}^{d|V(G)|}\rightarrow \mathbb{R}^{d+{d-k\choose2}}$ is a rational polynomial map
given by
\[
g(p)=(p_1(v_1), \dots, p_d(v_1),
p_1(v_2), \dots, p_{d-1}(v_2),\dots, p_1(v_t), \dots,  p_{d-t+1}(v_t))\qquad (p\in \mathbb{R}^{d|V(G)|})
\]
 where $p_i(v_j)$ denotes the $i$-th coordinate of $p(v_j)$.
Augmenting $f_{G,L}$ by $g$ corresponds to ``pinning down'' some coordinates to eliminate trivial continuous motions.

\begin{lemma}[\cite{kst}]
\label{prop:rigidity_matrix2}
Let $(G,\psi,p)$ be a  $\Gamma$-labelled framework in $\mathbb{R}^d$ with rank $k$ periodicity
and $L:\Gamma\rightarrow \mathbb{R}^d$.
Suppose that $p$ is generic and $|V(G)|\geq \max\{d-k, 1\}$.
Then
\[
\rank d\hat{f}_{G,L}|_p=\rank df_{G,L}|_p+d+{d-k\choose 2}.
\]
\end{lemma}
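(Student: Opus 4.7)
The plan is to use rank-nullity, exploiting the fact that the augmentation $g$ is designed precisely to kill the trivial infinitesimal motions of the framework, and nothing more. Recall that $\ker df_{G,L}|_p$ always contains the space $T$ of infinitesimal trivial motions of an $L$-periodic framework, consisting of pairs $(a,R)$ with $a \in \mathbb{R}^d$ a translation vector and $R$ a skew-symmetric $d\times d$ matrix satisfying $R L(\gamma)=0$ for every $\gamma \in \Gamma$. Since $L$ has rank $k$, such $R$ are parametrised by skew-symmetric endomorphisms of the orthogonal complement of $\spa L(\Gamma)$, giving $\dim T = d + {d-k \choose 2}$.

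The main step is to verify that $dg|_p$ restricted to $T$ is an isomorphism onto $\mathbb{R}^{d+{d-k\choose 2}}$. A trivial motion $\dot p(v) = a + R p(v)$ lies in $\ker dg|_p$ if and only if the corresponding pinned coordinates all vanish. The $d$ pinned coordinates at $v_1$ impose $a = -R p(v_1)$, expressing $a$ in terms of $R$ and $p(v_1)$; the remaining pinned coordinates then become linear conditions of the form that certain coordinates of $R(p(v_i)-p(v_1))$ equal zero for $i=2,\ldots,t$. I would then show that, for generic $p$, these conditions are independent enough to force $R=0$ and hence $a=0$. Concretely, the determinant of the relevant minor of the linear map $R \mapsto dg|_p(R p(\cdot))$ is a polynomial in the entries of $p$ whose non-vanishing can be checked by exhibiting a single configuration, compatible with the hypothesis $|V(G)| \geq \max\{d-k,1\}$, at which the pinned coordinates separate the directions of $R$. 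Since $p$ is generic, it lies outside the zero set of this polynomial and the restriction $dg|_p|_T$ is injective, hence (by equality of dimensions) bijective.

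Once $dg|_p|_T$ is known to be an isomorphism, the conclusion follows by a short linear-algebra argument. Because $T \subseteq \ker df_{G,L}|_p$ and $dg|_p|_T$ is surjective onto $\mathbb{R}^{d+{d-k\choose 2}}$, the restriction $dg|_p\colon \ker df_{G,L}|_p \to \mathbb{R}^{d+{d-k\choose 2}}$ is surjective, so its kernel $\ker df_{G,L}|_p \cap \ker dg|_p = \ker d\hat f_{G,L}|_p$ has dimension $\dim \ker df_{G,L}|_p - \bigl(d+{d-k\choose 2}\bigr)$. Applying rank-nullity to $d\hat f_{G,L}|_p$ on the domain $\mathbb{R}^{d|V(G)|}$ then yields the stated identity. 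The main obstacle is the nondegeneracy verification in the previous paragraph: one must produce an explicit configuration that witnesses the non-vanishing of the relevant pinning minor while simultaneously respecting the periodicity constraint $R L(\Gamma)=0$ and the precise pattern of coordinates extracted by $g$; this combinatorial bookkeeping is where the hypothesis $|V(G)|\ge\max\{d-k,1\}$ enters critically.
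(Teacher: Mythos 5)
Your strategy — identify the trivial-motion space $T$ inside $\ker df_{G,L}|_p$, show $dg|_p$ is injective on $T$, and conclude by rank–nullity — is the right shape and is indeed the approach behind the cited result. But the nondegeneracy verification that you defer to the end, calling it ``the main obstacle,'' hides a genuine subtlety that your sketch does not address and that generic choice of $p$ alone cannot fix. After eliminating $a$ via the pins at $v_1$, the residual map sends $R$ to certain \emph{standard} coordinates of $R(p(v_i)-p(v_1))$, $i\ge 2$. But $R$ is constrained by $R\,L(\gamma)=0$, so (by skew-symmetry) its image lies entirely in $W:=(\spa L(\Gamma))^{\perp}$. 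If some of the pinned coordinate directions $e_j$ happen to lie in $\spa L(\Gamma)$ rather than in $W$ — which is a property of the fixed lattice $L$, not of $p$ — then the corresponding constraints $(R(p(v_i)-p(v_1)))_j=0$ are \emph{identically} zero as polynomials in $p$. In that case the ``relevant pinning minor'' you hope to be generically nonvanishing is the zero polynomial, and no single witnessing configuration exists. The missing step is to first apply an isometry of $\mathbb{R}^d$ so that $W=\spa\{e_1,\dots,e_{d-k}\}$; then $R$ is a skew-symmetric $(d-k)\times(d-k)$ block acting on the projected points, the residual pinning becomes precisely the standard finite pinning in dimension $d-k$, and the determinant argument goes through. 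Without this reduction your injectivity claim for $dg|_p|_T$ is false for badly aligned $L$.

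A related bookkeeping issue that you inherit from the paper's display but that your argument cannot afford to ignore: the formula given for $g$ lists $\sum_{i=1}^{t}(d-i+1)$ components, which for $0<k<d-1$ exceeds the claimed codomain dimension $d+\binom{d-k}{2}$ by $k(d-k-1)$. Your rank–nullity step uses in an essential way that the codomain of $g$ has dimension \emph{exactly} $\dim T$, so that injectivity of $dg|_p|_T$ yields surjectivity of $dg|_p$ on all of $\ker df_{G,L}|_p$ and hence the upper bound $\dim dg(\ker df_{G,L}|_p)\le d+\binom{d-k}{2}$. With an oversized codomain this upper bound can fail for flexible frameworks (e.g.\ two vertices and no edges already gives a counterexample). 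After the alignment above, the correct pinning takes only the first $d-k-i+1$ coordinates of $v_i$ for $i\ge 2$, giving a codomain of the right size; your argument should be rephrased around that corrected $g$ rather than the one printed here.
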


We also need an adapted version of \cite[Lemma 4.5]{kst}. To state this lemma, we require the following definition. 

Let $(G,\psi)$ be a $\Gamma$-labelled graph and let $v$ be a vertex of $G$.
Suppose that every edge incident to $v$ is directed from $v$.
For each pair of nonparallel edges $e_1=vu$ and $e_2=vw$ in $(G,\psi)$,
let $e_1\cdot e_2$ be the edge from $u$ to $w$ with  label \(\psi(vu)^{-1}\psi(vw)\).
We define \((G_v,\psi_v)\) to be the $\Gamma$-labelled graph obtained from $(G,\psi)$ by removing $v$
and inserting $e_1\cdot e_2$ (unless it is already present) for every pair of nonparallel edges $e_1, e_2$ incident to $v$.

\begin{lemma} \label{lem:redrig}
Let $(G,\psi,p)$ be a generic $\Gamma$-labelled framework in $\mathbb{R}^d$  with
rank $k$ periodicity $\Gamma$ and with $|V(G)|\geq d-k+1$
and let $L:\Gamma\rightarrow \mathbb{R}^d$ be nonsingular. Suppose that the covering $(\tilde G,\tilde p)$ has a vertex $v$ with at least $d+1$ neighbours \(\gamma_0v_0,\gamma_1v_1,\dots,\gamma_dv_d\), where $v,v_i\in V(G),\gamma_i\in \Gamma$, so that $\tilde p(\gamma_0v_0), \tilde p(\gamma_1v_1),\ldots \tilde p(\gamma_dv_d)$ affinely span \(\mathbb{R}^d\). Suppose further that
\begin{itemize}
\item $(G-v,\psi|_{G-v}, p')$ is $L$-periodically rigid in $\mathbb{R}^d$, with notation \(p'=p|_{V(G)-v}\), and
\item $(G_v, \psi_v, p')$ is $L$-periodically globally rigid in $\mathbb{R}^d$.
\end{itemize}
Then $(G, \psi, p)$ is $L$-periodically globally rigid in $\mathbb{R}^d$.
\end{lemma}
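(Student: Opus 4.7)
Fix an $L$-periodic framework $(G,\psi,q)$ that is $L$-periodically equivalent to $(G,\psi,p)$; the aim is to derive $L$-periodic congruence. I will first show that the reduced framework $(G_v,\psi_v,q')$, where $q'=q|_{V(G)-v}$, is $L$-periodically equivalent to $(G_v,\psi_v,p')$, and then invoke the hypothesised global rigidity of the latter.

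Once that equivalence is established, global rigidity of $(G_v,\psi_v,p')$ yields an $L$-periodic congruence $T$ with $\tilde q(u)=T(\tilde p(u))$ for every $u\in V(\tilde G)-\tilde v$. Since $T$ is an isometry, the images $\tilde q(\gamma_iv_i)=T(\tilde p(\gamma_iv_i))$ still affinely span $\mathbb{R}^d$; combined with the preserved distances $\|\tilde q(v)-\tilde q(\gamma_iv_i)\|=\|\tilde p(v)-\tilde p(\gamma_iv_i)\|$ and the elementary fact that a point in $\mathbb{R}^d$ is uniquely determined by its distances to $d+1$ affinely independent points, this forces $\tilde q(v)=T(\tilde p(v))$ and completes the $L$-periodic congruence.

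The main technical step is preservation of the derived edges of $G_v$. Edges inherited from $G-v$ pose no issue; a derived edge $e_1\cdot e_2=uw$ (with $e_1=vu$, $e_2=vw$) corresponds in the covering to a pair of neighbours $\tilde p(\psi(vu)u)$ and $\tilde p(\psi(vw)w)$ of the lift $\tilde v$, and we must show the distance between them agrees in $\tilde q$. To this end, I apply Lemma~\ref{lem:genkneser} to obtain an $(L,0^d)$-periodic motion $p_t$ ($t\in[0,1]$) in $\mathbb{R}^{2d}$ from $(p,0^d)$ to $(q,0^d)$ that preserves all edge lengths of $G$ and along which every pairwise distance between vertex orbits is monotone in $t$. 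Since a monotone function agreeing at both endpoints is constant, it suffices to show each derived-edge distance is constant along the deformation. Here the $L$-periodic rigidity of $(G-v,\psi|_{G-v},p')$ enters: by Theorem~\ref{prop:rigidity_matrix} together with Lemma~\ref{prop:rigidity_matrix2}, the augmented Jacobian $d\hat f_{G-v,L}|_{p'}$ has full column rank, so after pinning the augmentation coordinates of $q$ to match those of $p$ (this uses the $L$-periodic congruence freedom), Lemma~\ref{lem:pq} yields $\overline{\mathbb{Q}(q')}=\overline{\mathbb{Q}(p')}$. Combining this algebraic constraint with the monotonicity supplied by Lemma~\ref{lem:genkneser} forces each derived-edge distance to remain constant along the motion, so $(G_v,\psi_v,q')$ is equivalent to $(G_v,\psi_v,p')$ and the argument closes.

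\textbf{Main obstacle.} The delicate point is the constancy argument. A priori $(G-v,(p',0^d))$ is flexible in $\mathbb{R}^{2d}$ (its rigidity matrix in $\mathbb{R}^{2d}$ has the same rank as in $\mathbb{R}^d$, which is not enough for $\mathbb{R}^{2d}$-rigidity), so the restricted motion $p_t|_{V(G)-v}$ need not be a trivial $(L,0^d)$-periodic motion, and monotonicity on its own does not preclude a derived-edge distance from strictly changing. The proof must therefore carefully balance the algebraic information from Lemma~\ref{lem:pq} against the geometric monotonicity of Lemma~\ref{lem:genkneser} to rule out strict monotonicity of the derived-edge distances; this coupling is the technical heart of the lemma.
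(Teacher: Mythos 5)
Your overall architecture is right --- show that the derived framework $(G_v,\psi_v,q')$ is $L$-periodically equivalent to $(G_v,\psi_v,p')$, invoke the hypothesised global rigidity of $(G_v,\psi_v,p')$ to get $p'=q'$, and then pin down $q(v)$ using the affinely spanning neighbours. You also correctly identify the role of Lemma~\ref{lem:pq}: $L$-periodic rigidity of $G-v$ gives nonsingularity of $d\hat f_{G-v,L}|_{p'}$ via Theorem~\ref{prop:rigidity_matrix} and Lemma~\ref{prop:rigidity_matrix2}, hence $\overline{\mathbb{Q}(q')}=\overline{\mathbb{Q}(p')}$, so $q'$ is generic. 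This much matches the paper's proof.

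The genuine gap is the step you yourself flag as the ``Main obstacle'': you never actually establish that the derived-edge distances are preserved. Routing the argument through Lemma~\ref{lem:genkneser} and monotonicity is a dead end, and your proposal does not escape it. The assertion ``monotone function agreeing at both endpoints is constant, it suffices to show each derived-edge distance is constant along the deformation'' is circular, since constancy is equivalent (given monotonicity) to agreement at the endpoints, which is precisely what must be proved. You then promise that ``combining this algebraic constraint with the monotonicity \dots forces each derived-edge distance to remain constant'', but no mechanism is supplied, and you concede in the next paragraph that you do not know how to make this coupling work. The paper does not use Lemma~\ref{lem:genkneser} here at all --- that lemma is only needed for Corollary~\ref{cor:smallrigidglobrigid}.

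The missing idea is purely algebraic, in the spirit of Tanigawa~\cite{T1}. Set $x_i=p(v_i)+L(\gamma_i)-p(v_0)$, $y_i=q(v_i)+L(\gamma_i)-q(v_0)$, $x_v=p(v)-p(v_0)$, $y_v=q(v)-q(v_0)$, and let $P,Q$ be the matrices with rows $x_i,y_i$ respectively ($P,Q$ nonsingular by genericity of $p'$ and $q'$). The $d$ edge constraints from $e_1,\dots,e_d$ give a linear relation expressing $y_v$ affinely in $x_v$ with coefficients in $\overline{\mathbb{Q}(p')}=\overline{\mathbb{Q}(q')}$; substituting into $\|x_v\|=\|y_v\|$ (the $e_0$ constraint) yields a quadratic polynomial in $x_v$ whose coefficients lie in $\overline{\mathbb{Q}(p')}$. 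Since the coordinates of $p(v)$ --- and hence of $x_v$ --- are algebraically independent over $\overline{\mathbb{Q}(p')}$, this polynomial must vanish identically, forcing $I_d-PQ^{-1}(PQ^{-1})^T=0$. Thus $PQ^{-1}$ is orthogonal, giving $P=SQ$ for some orthogonal $S$, and therefore $\|x_i-x_j\|=\|S(y_i-y_j)\|=\|y_i-y_j\|$ for all $i,j$, which is exactly preservation of the derived-edge lengths. This polynomial-vanishing argument via genericity of $p(v)$ over $\overline{\mathbb{Q}(p')}$ is the technical heart of the lemma and is entirely absent from your proposal.
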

\begin{proof}
Pin the framework $(G,\psi,p)$ and take any $q\in \hat{f}_{G,L}^{-1}(\hat{f}_{G,L}(p))$.  Since $|V(G)|\geq d-k+1>\max\{d-k, 1\}$, we may assume that $v$ is not ``pinned'' (i.e., $v$ is different from the vertices selected when augmenting $f_{G,L}$ to $\hat{f}_{G,L}$). Our goal is to show that $p=q$.

Let $p'$ and $q'$ be the restrictions of $p$ and $q$ to $V(G)-v$, respectively.
Since $(G-v,\psi|_{G-v},p')$ is $L$-periodically rigid,
$d\hat{f}_{G-v,L}|_{p'}$ is nonsingular by Theorem~\ref{prop:rigidity_matrix} and Lemma~\ref{prop:rigidity_matrix2}.
Hence it follows from Lemma~\ref{lem:pq} that $\overline{\mathbb{Q}(p')}=\overline{\mathbb{Q}(q')}$.
This in turn implies that $q'$ is generic.

Consider the edges $e_0=vv_0, e_1=vv_1,\dots, e_{d}=vv_d$ in $(G,\psi)$ (all assumed to be directed from $v$) with respective labels $\psi(e_0)=\gamma_0, \psi(e_1)=\gamma_1,\ldots, \psi(e_d)=\gamma_d$.  Note that we may have $v_i=v_j$ for some $i,j$. By switching, we may further assume that $\gamma_0=\textrm{id}$.
For each $1\leq i\leq d$, let
\begin{align*}
x_i&=p(v_i)+L(\gamma_i)-p(v_0), \\
y_i&=q(v_i)+L(\gamma_i)-q(v_0),
\end{align*}
and let $P$ and $Q$ be the $d\times d$-matrices whose $i$-th row is $x_i$ and $y_i$, respectively.
Note that since $p(v_i)+L(\gamma_i)-p(v_0)=\tilde{p}(\gamma_iv_i)-\tilde{p}(v_0)$, and $q(v_i)+L(\gamma_i)-q(v_0)=\tilde{q}(\gamma_iv_i)-\tilde{q}(v_0)$, and 
 $p'$, \(q'\) are generic, $x_1,\dots, x_d$ and $y_1,\dots, y_d$ are, respectively, linearly independent, and hence $P$ and $Q$ are both nonsingular.

Let $x_v=p(v)-p(v_0)$ and $y_v=q(v)-q(v_0)$.
We then have $\|x_v\|=\|y_v\|$ since $G$ has the edge $vv_0$ with $\psi(vv_0)=\textrm{id}$.
Due to the existence of the edge $e_i$ we also have
\begin{align*}0 &=
\langle p(v_i)+L(\gamma_i)-p(v), p(v_i)+L(\gamma_i)-p(v)\rangle  -
\langle q(v_i)+L(\gamma_i)-q(v),q(v_i)+L(\gamma_i)-q(v) \rangle\\
&=\langle x_i-x_v, x_i-x_v\rangle - \langle y_i-y_v, y_i-y_v\rangle\\
&=(\|x_i\|^2-\|y_i\|^2)-2\langle x_i, x_v\rangle +2\langle y_i, y_v\rangle,
 \end{align*}
 where we used $\|x_v\|=\|y_v\|$.
Denoting by $\delta$ the $d$-dimensional vector whose $i$-th coordinate is equal to $\|x_i\|^2-\|y_i\|^2$,
the above $d$ equations can be summarized as
$$0=\delta-2P^T x_v+2Q^T y_v$$ which is equivalent to
$$y_v=(Q^T)^{-1}P^T  x_v -\frac{1}{2}(Q^T)^{-1}\delta.$$
By putting this into  $\|x_v\|^2=\|y_v\|^2$, we obtain
\begin{equation}
\label{eq:algebraic}
x_v^T(I_d-PQ^{-1}(PQ^{-1})^T)x_v-(\delta^T Q^{-1}(Q^{-1})^T P^T)x_v+\frac{1}{4}\delta^T Q^{-1}(Q^{-1})^T\delta=0,
\end{equation}
where $I_d$ denotes the $d\times d$ identity matrix.

Note that each entry of $P$ is contained in $\mathbb{Q}(p')$, and each entry of $Q$ is contained in $\mathbb{Q}(q')$. Since $\overline{\mathbb{Q}(p')}=\overline{\mathbb{Q}(q')}$,
this implies that each entry of $PQ^{-1}$ is contained in $\overline{\mathbb{Q}(p')}$.
On the other hand, since $p$ is generic, the set of coordinates  of $p(v)$ (and hence those of $x_v$) is algebraically independent over $\overline{\mathbb{Q}(p')}$.
Therefore, by regarding the left-hand side of (\ref{eq:algebraic}) as a polynomial in $x_v$,
the polynomial must be identically zero. In particular, we get
$$I_d-PQ^{-1}(PQ^{-1})^T=0.$$
Thus, $PQ^{-1}$ is orthogonal.
In other words, there is some orthogonal matrix $S$ such that $P=SQ$,
and we get $\|p(v_i)+L(\gamma_i)-p(v_0)\|=\|x_i\|=\|Sy_i\|=\|y_i\|=\|q(v_i)+L(\gamma_i)-q(v_0)\|$ for every $1\leq i\leq d$.
Therefore, $q'\in f_{G_v, L}^{-1}( f_{G_v, L}(p'))$.
Since  $(G_v, \psi_v,p)$ is $L$-periodically globally rigid, this in turn implies that
$f_{V-v, L}(p')=f_{V-v, L}(q')$.
Thus we have $p'=q'$.

Since $\{p(v_i)+L(\gamma_i): 0\leq i\leq d\}$ affinely spans $\mathbb{R}^d$, there is a unique extension of $p':V-v\rightarrow \mathbb{R}^d$ to $r:V\rightarrow \mathbb{R}^d$ such that $f_{G, L}(r)=f_{G,L}(p)$.
Thus we obtain $p=q$.
\end{proof}

Note that it follows from \cite[Lemma 3.1]{kst} that a generic $\Gamma$-labelled framework $(G, \psi, p)$ in $\mathbb{R}^d$
with $|V(G)|\geq 2$, rank $k$ periodicity $\Gamma$, and nonsingular lattice $L:\Gamma\rightarrow \mathbb{R}^d$ cannot be $L$-periodically globally rigid in $\mathbb{R}^d$ if the rank of $(G,\psi)$ is less than $k$. This is because in this case the covering $(\tilde G, \tilde p)$ of $(G,\psi,p)$ has infinitely many connected components, each of which may be `flipped' individually in a periodic fashion to obtain an $L$-periodic framework $(\tilde G, \tilde q)$ which is equivalent, but not congruent to $(\tilde G, \tilde p)$. 

This is illustrated by the two equivalent but non-congruent $2$-periodic frameworks in $\mathbb{R}^2$ shown in Figure~\ref{fig:flip} whose $\Gamma$-labelled graph $(G,\psi)$ has rank 1. Note, however, that $(G,\psi)$ is $L$-periodically $2$-rigid, since it is $L$-periodically rigid and the removal of any vertex results in a trivial framework with one vertex orbit and no edges (recall also Theorem~\ref{prop:rigidity_matrix}). 

\begin{figure}[htp]
\begin{center}
\begin{tikzpicture}[thick,scale=1]
\tikzstyle{every node}=[circle, draw=black, fill=black, inner sep=0pt, minimum width=3pt];

\node (pa) at (0,0) {};
\node[draw=black,fill=white](pb) at (0.9,0.6) {};

 \node [rectangle,draw=white, fill=white] (c) at (-0.1,0.6) {\tiny$(0,0)$};
 \node [rectangle,draw=white, fill=white] (c) at (1,0) {\tiny$(1,0)$};
 
\path(pa) edge [->,bend right=40] (pb);
\path(pa) edge [->,bend left=40] (pb);

\node [rectangle,draw=white, fill=white] (c) at (0,-0.5) {$\qquad$};

\end{tikzpicture}
\hspace{1cm}
\begin{tikzpicture}[thick,scale=1]
\tikzstyle{every node}=[circle, draw=black, fill=black, inner sep=0pt, minimum width=3pt];

\draw[dashed, thin] (0,0)--(5,0);
\draw[dashed, thin] (0,1)--(5,1);
\draw[dashed, thin] (0,2)--(5,2);

\draw[dashed, thin] (1,-0.3)--(1,2.3);
\draw[dashed, thin] (2,-0.3)--(2,2.3);
\draw[dashed, thin] (3,-0.3)--(3,2.3);
\draw[dashed, thin] (4,-0.3)--(4,2.3);

\node (p1) at (0.3,0.4) {};
\node (p1r) at (1.3,0.4) {};
\node (p1rr) at (2.3,0.4) {};
\node (p1rrr) at (3.3,0.4) {};
\node (p1rrrr) at (4.3,0.4) {};

\node (p1o) at (0.3,1.4) {};
\node (p1or) at (1.3,1.4) {};
\node (p1orr) at (2.3,1.4) {};
\node (p1orrr) at (3.3,1.4) {};
\node (p1orrrr) at (4.3,1.4) {};

\node [draw=black,fill=white](p2) at (0.7,0.6) {};
\node [draw=black,fill=white](p2r) at (1.7,0.6) {};
\node [draw=black,fill=white](p2rr) at (2.7,0.6) {};
\node [draw=black,fill=white](p2rrr) at (3.7,0.6) {};
\node [draw=black,fill=white](p2rrrr) at (4.7,0.6) {};

\node[draw=black,fill=white] (p2o) at (0.7,1.6) {};
\node[draw=black,fill=white] (p2or) at (1.7,1.6) {};
\node[draw=black,fill=white] (p2orr) at (2.7,1.6) {};
\node[draw=black,fill=white] (p2orrr) at (3.7,1.6) {};
\node[draw=black,fill=white] (p2orrrr) at (4.7,1.6) {};

\draw(p2)--(0,0.5);
\draw(p2o)--(0,1.5);

\draw(p1rrrr)--(5,0.5);
\draw(p1orrrr)--(5,1.5);

\draw(p1)--(p2);
\draw(p1r)--(p2r);
\draw(p1rr)--(p2rr);
\draw(p1rrr)--(p2rrr);
\draw(p1rrrr)--(p2rrrr);

\draw(p1o)--(p2o);
\draw(p1or)--(p2or);
\draw(p1orr)--(p2orr);
\draw(p1orrr)--(p2orrr);
\draw(p1orrrr)--(p2orrrr);

\draw(p1o)--(p2or);
\draw(p1or)--(p2orr);
\draw(p1orr)--(p2orrr);
\draw(p1orrr)--(p2orrrr);

\draw(p1)--(p2r);
\draw(p1r)--(p2rr);
\draw(p1rr)--(p2rrr);
\draw(p1rrr)--(p2rrrr);
\end{tikzpicture}
\hspace{1cm}
\begin{tikzpicture}[thick,scale=1]
\tikzstyle{every node}=[circle, draw=black, fill=black, inner sep=0pt, minimum width=3pt];

\draw[dashed, thin] (0,0)--(5,0);
\draw[dashed, thin] (0,1)--(5,1);
\draw[dashed, thin] (0,2)--(5,2);

\draw[dashed, thin] (1,-0.3)--(1,2.3);
\draw[dashed, thin] (2,-0.3)--(2,2.3);
\draw[dashed, thin] (3,-0.3)--(3,2.3);
\draw[dashed, thin] (4,-0.3)--(4,2.3);

\node (p1) at (0.3,0.4) {};
\node (p1r) at (1.3,0.4) {};
\node (p1rr) at (2.3,0.4) {};
\node (p1rrr) at (3.3,0.4) {};
\node (p1rrrr) at (4.3,0.4) {};

\node (p1o) at (0.3,1.4) {};
\node (p1or) at (1.3,1.4) {};
\node (p1orr) at (2.3,1.4) {};
\node (p1orrr) at (3.3,1.4) {};
\node (p1orrrr) at (4.3,1.4) {};

\node[draw=black,fill=white] (p2) at (0.7,0.2) {};
\node [draw=black,fill=white](p2r) at (1.7,0.2) {};
\node[draw=black,fill=white] (p2rr) at (2.7,0.2) {};
\node [draw=black,fill=white](p2rrr) at (3.7,0.2) {};
\node[draw=black,fill=white] (p2rrrr) at (4.7,0.2) {};

\node [draw=black,fill=white](p2o) at (0.7,1.2) {};
\node [draw=black,fill=white](p2or) at (1.7,1.2) {};
\node[draw=black,fill=white] (p2orr) at (2.7,1.2) {};
\node [draw=black,fill=white](p2orrr) at (3.7,1.2) {};
\node[draw=black,fill=white] (p2orrrr) at (4.7,1.2) {};

\draw(p2)--(0,0.3);
\draw(p2o)--(0,1.3);

\draw(p1rrrr)--(5,0.3);
\draw(p1orrrr)--(5,1.3);

\draw(p1)--(p2);
\draw(p1r)--(p2r);
\draw(p1rr)--(p2rr);
\draw(p1rrr)--(p2rrr);
\draw(p1rrrr)--(p2rrrr);

\draw(p1o)--(p2o);
\draw(p1or)--(p2or);
\draw(p1orr)--(p2orr);
\draw(p1orrr)--(p2orrr);
\draw(p1orrrr)--(p2orrrr);

\draw(p1o)--(p2or);
\draw(p1or)--(p2orr);
\draw(p1orr)--(p2orrr);
\draw(p1orrr)--(p2orrrr);

\draw(p1)--(p2r);
\draw(p1r)--(p2rr);
\draw(p1rr)--(p2rrr);
\draw(p1rrr)--(p2rrrr);

\end{tikzpicture}
\end{center}
\vspace{-0.2cm}
\caption{Example of a $\mathbb{Z}^2$-labelled graph $(G,\psi)$ with $\rank (G,\psi)=1$ (on the left) and two equivalent but not congruent $L$-periodic frameworks $(\tilde G,\tilde p)$ and $(\tilde G,\tilde q)$ with rank 2 periodicity in $\mathbb{R}^2$. }
\label{fig:flip}
\end{figure}
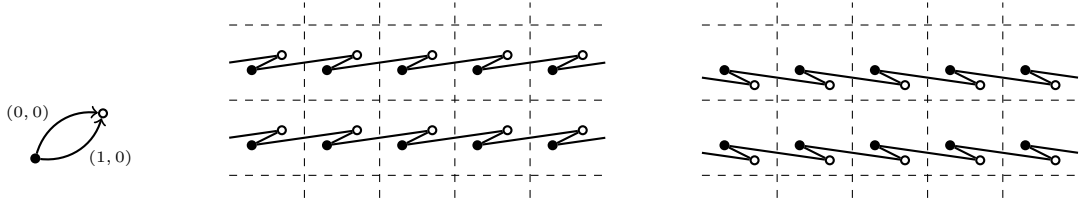

It follows that in the case when $k=d$, $L$-periodic 2-rigidity is not sufficient for $L$-periodic global rigidity. In this case we need the added assumption that $\rank (G,\psi)=d$. In the case when $k<d$ and $\rank (G,\psi)<k$, $(G,\psi, p)$ can also not be $L$-periodically globally rigid, by \cite[Lemma 3.1]{kst}. However, in this case, $(G,\psi, p)$ is also not $L$-periodically 2-rigid.

\begin{theorem}\label{thm:2RGR}
Let $(G, \psi, p)$ be a generic $\Gamma$-labelled framework in $\mathbb{R}^d$ with
rank $k$ periodicity $\Gamma$, and let $L:\Gamma\rightarrow \mathbb{R}^d$ be nonsingular. If $(G, \psi, p)$ is $L$-periodically 2-rigid, and if $(G,\psi)$ is also of rank $d$ in the case when $k=d$, then $(G, \psi, p)$ is also $L$-periodically globally rigid in $\mathbb{R}^d$.
\end{theorem}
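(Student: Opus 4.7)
The plan is to proceed by induction on $|V(G)|$, using Lemma \ref{lem:redrig} as the main reduction. For the base case, I would treat $\Gamma$-labelled frameworks with $|V(G)| \leq d-k+1$: under the hypotheses, $(G,\psi,p)$ is $L$-periodically rigid (a consequence of $2$-rigidity together with the rank condition when $k=d$), so Corollary \ref{cor:smallrigidglobrigid} delivers $L$-periodic global rigidity directly.

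For the inductive step, the strategy is to identify a vertex $v\in V(G)$ satisfying the hypotheses of Lemma \ref{lem:redrig}. Concretely, I want $v$ such that (i) $v$ has at least $d+1$ neighbors in the covering $\tilde{G}$, so that the genericity of $p$ automatically places them in affinely general position in $\mathbb{R}^d$, and (ii) the reduced framework $(G_v, \psi_v, p')$ is again $L$-periodically $2$-rigid, and moreover satisfies $\rank(G_v, \psi_v)=d$ whenever $k=d$. Once such a $v$ is secured, the $L$-periodic rigidity of $(G-v, \psi|_{G-v}, p')$ is immediate from the $2$-rigidity of $(G,\psi,p)$, while the induction hypothesis applied to $(G_v, \psi_v, p')$ supplies its $L$-periodic global rigidity. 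Lemma \ref{lem:redrig} then yields the $L$-periodic global rigidity of $(G,\psi,p)$, completing the induction.

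The main obstacle is the combinatorial step (ii): showing that a $2$-rigid $\Gamma$-labelled framework always admits a vertex $v$ for which the splitting-off $(G_v, \psi_v)$ inherits $2$-rigidity. I would follow Tanigawa's matroidal argument in the finite case, adapted to the generic $L$-periodic rigidity matroid on $(G,\psi)$ whose rank function is captured by Theorem \ref{prop:rigidity_matrix}. A natural candidate is a vertex $v$ of minimum degree in $\tilde{G}$, since in that situation $G\mapsto G_v$ serves as an inverse of a periodic $0$- or $1$-extension, an operation known to preserve generic $L$-periodic rigidity. To verify $L$-periodic rigidity of $G_v - u$ for each other vertex orbit $u$, the plan is to combine the $L$-periodic rigidity of $G-u$ (supplied by the $2$-rigidity of $G$) with a rank calculation ensuring that the new edges introduced among the neighbors of $v$ remain independent after removing $u$. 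Preserving the rank condition in the case $k=d$ is the easy part, since $G\mapsto G_v$ only adds edges whose labels are products of existing ones, so $\rank(G_v, \psi_v) \geq \rank(G-v, \psi|_{G-v})$, which remains equal to $d$ by the $2$-rigidity assumption on $(G,\psi,p)$.
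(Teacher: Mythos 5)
Your high-level strategy---induction on $|V(G)|$ with base case handled by Corollary~\ref{cor:smallrigidglobrigid} and inductive step via Lemma~\ref{lem:redrig}---matches the paper's proof. However, there are several concrete gaps in the details.

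The first gap is your claim that having $d+1$ neighbours in $\tilde G$ together with genericity of $p$ automatically places them in affinely general position. This is false when two or more of the neighbours $\gamma_i v_i$ belong to the same vertex orbit: their positions then differ by lattice vectors $L(\gamma_i-\gamma_j)$, which are \emph{not} generic but fixed by $L$. In the extreme case where all $d+1$ neighbours are in the same orbit, the affine span is all of $\mathbb{R}^d$ if and only if the labels $\gamma_0,\dots,\gamma_d$ span a rank-$d$ subgroup, a purely combinatorial condition. The paper handles this with a careful case split: when $|V(G)|=d-k+2$ and $d=k$ (so $|V(G)|=2$), the affine span follows from the hypothesis $\rank(G,\psi)=d$; when $d>k$, they show that a failure of the affine span condition would allow a periodic rotation of $v$ about a $(d-2)$-dimensional axis that contradicts $2$-rigidity. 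Your argument does not address this, so Lemma~\ref{lem:redrig} cannot simply be invoked.

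The second gap is your treatment of step (ii). You plan to select a vertex $v$ of minimum degree and argue that $G\mapsto G_v$ is an inverse of a $0$- or $1$-extension, then verify $L$-periodic rigidity of $G_v-u$ by an independence calculation. This is both harder than necessary and not clearly workable: $G\mapsto G_v$ is not generally an inverse extension when $v$ has higher degree, and independence of the new edges is neither necessary nor what you want. The paper's argument is simpler and works for \emph{any} vertex $v$: since the neighbours of any occurrence of $v$ induce a complete graph in $\tilde{G_v}$, re-attaching $v$ to $G_v-u$ cannot destroy flexibility (the complete graph rigidly determines $v$'s position). But $G-u$ is a spanning subframework of $(G_v-u)$ plus $v$ and its incident edges, and $G-u$ is rigid by $2$-rigidity of $G$---a contradiction. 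No minimum-degree selection or matroid calculation is required.

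Finally, your rank-preservation argument is also not correct as stated. You assert that $\rank(G-v,\psi|_{G-v})=d$ follows from $2$-rigidity, but $L$-periodic rigidity of $G-v$ does \emph{not} imply rank $d$: the paper's own Figure~\ref{fig:flip} exhibits a $2$-rigid framework with $k=d=2$ but rank $1$. The paper instead shows that $2$-rigidity forces $(G,\psi)$ to be $2$-connected, from which $\Gamma_G=\Gamma_{G_v}$ (every closed walk through $v$ is preserved under the splitting-off), and hence $\rank(G_v,\psi_v)=\rank(G,\psi)=d$ when $k=d$. You will need some version of this connectivity argument.
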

\begin{proof}
%
We use induction on \(|V(G)|\). If \(|V(G)|\leq d-k+1\), then $(G, \psi, p)$ is $L$-periodically globally rigid by the $L$-periodic rigidity of $(G, \psi, p)$ and Corollary \ref{cor:smallrigidglobrigid}.
%

Now suppose that \(|V(G)|\geq d-k+2\), and let $(\tilde G, \tilde p)$ be the covering of $(G,\psi, p)$. By our assumption, $(G-v,\psi|_{G-v},p|_{V(G)-v})$ is $L$-periodically rigid for any vertex \(v\in V(G)\).

Suppose first that \(|V(G)|= d-k+2\). Then $(G-v,\psi|_{G-v},p|_{V(G)-v})$ is also $L$-periodically globally rigid by Corollary \ref{cor:smallrigidglobrigid}. We claim that for any occurrence of any \(v\in V(G)\) in the covering $\tilde G$, the affine span of the set $\{\tilde{p}(w) |\, vw \in E(\tilde{G})\}$   is all of \(\mathbb{R}^d\).

 If $d=k$ (and hence $|V(G)|=d-k+2=2$) the claim follows from the fact that  $\rank (G,\psi)=d$, by our assumption. 
 
   If $d>k$ (and hence $|V(G)|=d-k+2>2$), then 
   we suppose for a contradiction that the claim is not true. Then the removal of a neighbour of $v$ (and of all vertices belonging to that same vertex orbit) results in an $L$-periodic framework with at least two distinct orbits of points (since $|V(G)|>2$) and, by our genericity assumption, this framework has the property that all the points connected to $\tilde{p}(v)$ affinely span a space of dimension at most $d-2$, so that $\tilde{p}(v)$  can be rotated about this $(d-2)$-dimensional axis. Since all copies of points in the same orbit can then also be rotated in a periodic fashion and the affine span of the non-moving points is $(d-1)$-dimensional (as a $k$-periodic configuration with $d-k$ vertex orbits), we obtain a contradiction to the $L$-periodic $2$-rigidity of $(\tilde G,\tilde p)$. 
   
   Thus, the affine span of the 
points $\{\tilde{p}(w) |\, vw \in E(\tilde{G})\}$
 is indeed all of \(\mathbb{R}^d\) as claimed, and it follows from Lemma~\ref{lem:redrig} that $(G,\psi, p)$ is $L$-periodically globally rigid.

We may therefore  assume that $|V(G)|>d-k+2$. We show that $(G_v, \psi_v, p')$ is $L$-periodically 2-rigid for any $v\in V(G)$. Suppose for a contradiction that this is not true. Then there is a vertex \(u\) whose removal results in an $L$-periodically flexible framework. As the neighbours of one occurrence of \(v\) in \(\tilde{G}\) induce a complete graph in $\tilde{G_v}$ (where  any pair of vertices from the same vertex orbit may always be considered adjacent due to the fixed lattice representation), adding \(v\) together with its incident edges  to $(G_v-u, \psi_v|_{G_v-u}, p|_{V(G)-\{u,v\}})$ still yields an $L$-periodically flexible framework. This is a contradiction, as $(G-u,\psi|_{G-u}, p|_{V(G)-u})$ is an $L$-periodically rigid $\Gamma$-labelled spanning subframework of the framework obtained from $(G_v-u, \psi_v|_{G_v-u}, p|_{V(G)-\{u,v\}})$ by adding $v$ and its incident edges.

Thus $(G_v, \psi_v, p')$ is $L$-periodically 2-rigid as claimed. Moreover, since $(G,\psi)$ is 2-connected by the $L$-periodic $2$-rigidity of $(G,\psi,p)$, it follows from the definition of $(G_v,\psi_v)$ that $\Gamma_G=\Gamma_{G_v}$. Thus, if $(G,\psi)$ is of rank $d$ then so is $(G_v,\psi_v)$. It now follows from the induction hypothesis that $(G_v, \psi_v, p')$ is $L$-periodically globally rigid. Moreover, by the same argument as above for the case when $|V(G)|=d-k+2>2$, the affine span of the points $\{\tilde{p}(w) |\, vw \in E(\tilde{G})\}$
 is all of \(\mathbb{R}^d\). Thus, by Lemma \ref{lem:redrig}, $(G, \psi, p)$ is $L$-periodically globally rigid.
\end{proof}


\section{Global rigidity of body-bar frameworks}\label{sec:bbglob}

Using Theorem~\ref{thm:2RGR} and the results in  \cite{kst}, we can now easily prove the following extension of Theorem~\ref{thmbbglobal} to periodic body-bar frameworks. We need the following definition.

An $L$-periodic body-bar realisation $(G_{\tilde{H}},\tilde p)$ of a multi-graph $\tilde H$ is {\em $L$-periodically bar-redundantly rigid} if for every edge orbit  $\tilde e$ of $\tilde H$, the  framework $(G_{\tilde{H}}- \tilde {e'}, \tilde p)$ is $L$-periodically rigid. (Recall the definition of a body-bar realisation in Section~\ref{subsec:bb}.)

\begin{theorem}\label{thm:bodybarglobalrig}
Let $(G_{\tilde{H}},\tilde p)$ be a generic $L$-periodic body-bar realisation of the multi-graph $\tilde H$ in $\mathbb{R}^d$ with rank $k$ periodicity $\Gamma$, and let $L:\Gamma\rightarrow \mathbb{R}^d$ be nonsingular.
Then $(G_{\tilde{H}},\tilde p)$ is $L$-periodically globally rigid in \(\mathbb{R}^d\) if and only if $(G_{\tilde{H}},\tilde p)$ is  $L$-periodically bar-redundantly rigid in \(\mathbb{R}^d\), and the quotient $\Gamma$-labelled graph of $G_{\tilde H}$ is of rank $d$ in the case when $k=d$.
\end{theorem}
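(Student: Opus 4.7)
The plan is to apply Theorem~\ref{thm:2RGR} to the bar-joint framework $(G_{\tilde H},\tilde p)$ for the sufficiency direction, and to combine the rank remark preceding Theorem~\ref{thm:2RGR} with a Hendrickson-type flexing argument for the necessity direction. The main bridge step is to show that, for generic $\tilde p$, bar-redundant rigidity of the body-bar multi-graph $\tilde H$ is equivalent to vertex-redundant (i.e.\ $L$-periodic 2-)rigidity of the bar-joint graph $G_{\tilde H}$, and that the rank of the quotient $\Gamma$-labelled graph of $G_{\tilde H}$ coincides with the rank of $(H,\psi)$, since cycles lying entirely inside a single body $B^v_{\tilde H}$ carry only trivial labels and only the bars contribute non-trivial ones.

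For sufficiency I would split the vertex orbits of $G_{\tilde H}$ into two classes. For an orbit of an internal body vertex $v_i\in\{v_1,\dots,v_{d+1}\}$, after deletion each body is still a complete graph on at least $d+1$ generic points (rigidity of $(G_{\tilde H},\tilde p)$ forces each body to carry at least one bar, so $|B^v_{\tilde H}|\geq d+2$), hence the body is still rigid and all bar connections to other bodies are intact; so the quotient framework remains $L$-periodically rigid. For an orbit of a bar-endpoint vertex $v_e$, the deletion is rigidity-equivalent to the deletion of the bar orbit $\tilde e$, because the remaining $d+1$ internal vertices of $B^v_{\tilde H}$ still form a rigid complete body and $v_e$ is connected to the rest of the body only through that body; this framework is $L$-periodically rigid by bar-redundant rigidity. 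Combining both classes gives $L$-periodic 2-rigidity of $(G_{\tilde H},\tilde p)$, and since the rank-$d$ hypothesis for $k=d$ matches ours (via the rank identification above), Theorem~\ref{thm:2RGR} delivers $L$-periodic global rigidity.

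For necessity I would argue the contrapositive in two parts. First, if $k=d$ and the rank of the quotient of $G_{\tilde H}$ is strictly less than $d$, then the covering splits into infinitely many connected components that can be independently flipped while keeping the lattice $L$ fixed, producing an equivalent but non-congruent $L$-periodic realisation; this is precisely the remark preceding Theorem~\ref{thm:2RGR} built on \cite[Lemma 3.1]{kst}. Second, if bar-redundant rigidity fails, then there is an edge orbit $\tilde e$ of $\tilde H$ for which $(G_{\tilde H}-\tilde{e'},\tilde p)$ admits a nontrivial $L$-periodic flex $\tilde p_t$. Along this flex all edges in the orbit $\tilde{e'}$ share a common length $\ell(t)$, and provided $\ell$ is non-constant the intermediate value theorem supplies a time $t^*>0$ with $\ell(t^*)=\ell(0)$, so that $(G_{\tilde H},\tilde p_{t^*})$ is $L$-periodically equivalent but not congruent to $(G_{\tilde H},\tilde p)$, contradicting global rigidity.

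The one step I expect to require genuine care is the genericity claim $\ell\not\equiv\ell(0)$ in the Hendrickson-type argument. This is the periodic analogue of Hendrickson's classical observation: were $\ell$ constant along every flex, the constraint row of $\tilde{e'}$ would be a $\mathbb{Q}$-algebraic consequence of the remaining rows of the $L$-periodic rigidity matrix, violating the algebraic independence of the coordinates of the generic $\tilde p$. Concretely this can be read off Theorem~\ref{prop:rigidity_matrix}: adding the $\tilde{e'}$-rows to the rigidity matrix of $G_{\tilde H}-\tilde{e'}$ strictly increases its rank, because $(G_{\tilde H},\tilde p)$ is rigid whereas $(G_{\tilde H}-\tilde{e'},\tilde p)$ is not. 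Everything else is routine bookkeeping with the body-bar construction.
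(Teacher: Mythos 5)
Your proposal follows the same architecture as the paper's proof: sufficiency via ``bar-redundant rigidity $\Rightarrow$ 2-rigidity'' followed by Theorem~\ref{thm:2RGR}, and necessity via the rank obstruction from \cite[Lemma 3.1]{kst} and a Hendrickson-type argument for the bar-redundancy condition. The paper simply cites \cite[Lemma~3.7]{kst} for the Hendrickson part and dispatches the 2-rigidity implication with the one-line remark that ``the edges connecting the bodies are all disjoint,'' whereas you unfold both. Your case analysis for the 2-rigidity step (internal body vertex vs.\ bar-endpoint vertex $v_e$, using $|B^v_{\tilde H}|\geq d+2$) is a faithful expansion of what the authors have in mind, and your rank observation that $\Gamma_{G_H}=\Gamma_H$ is correct though not strictly needed, since the theorem statement is already phrased in terms of the quotient of $G_{\tilde H}$.

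One imprecision worth flagging in the Hendrickson reconstruction: non-constancy of $\ell$ along the flex is not by itself enough for the intermediate value theorem to return a $t^*>0$ with $\ell(t^*)=\ell(0)$ --- $\ell$ could a priori be strictly monotone on the whole flex domain. The standard argument needs two further inputs that you do not spell out: (i) the configuration space of $(G_{\tilde H}-\tilde{e'},\tilde p)$ with fixed edge lengths and fixed lattice, after pinning to kill trivial motions, is compact (this uses that the framework is rigid up to one degree of freedom, so positions stay bounded); and (ii) genericity makes $p$ a regular point of $\ell$ restricted to that compact manifold --- which is precisely what your rigidity-matrix rank comparison establishes --- so $\ell(p)$ is strictly interior to the range and, on a compact connected 1-manifold, $\ell^{-1}(\ell(p))$ then contains a second point $q\neq p$. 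You have the crucial genericity/rank observation; you should replace the bare appeal to IVT with the compactness-plus-regular-value step, or simply cite \cite[Lemma~3.7]{kst} as the paper does. Apart from this, the argument is sound and essentially the paper's.
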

\begin{proof} It immediately follows from  \cite[Lemma 3.7]{kst} that $L$-periodic bar-redundant  rigidity is necessary for a generic $L$-periodic body-bar realisation to be $L$-periodically globally rigid. 
Moreover, it follows from  \cite[Lemma 3.1]{kst} that in the case when $k=d$,  the rank of the quotient $\Gamma$-labelled graph of $G_{\tilde H}$ must be equal to $d$  for a generic $L$-periodic body-bar realisation to be $L$-periodically globally rigid (recall also the discussion in Section~\ref{sec:2rigid}).
 It is also easy to see that if a generic $L$-periodic body-bar realisation is $L$-periodically bar-redundantly rigid, then it is $L$-periodically 2-rigid, since the edges connecting the bodies are all disjoint. The result now follows from Theorem~\ref{thm:2RGR}.
\end{proof}

Note that generic $L$-periodic bar-redundant rigidity can easily be checked in polynomial time  based on the combinatorial characterisation of generic $L$-periodic rigidity of body-bar frameworks in $\mathbb{R}^d$ conjectured by Ross in \cite[Conjecture 5.1]{rossbb} and proved by Tanigawa in \cite[Theorem 7.2]{T2}. Using our notation and a simplified expression for the dimension of the space of trivial motions for a $k$-periodic framework in $\mathbb{R}^d$, this result may be restated as follows.

\begin{theorem}[\cite{T2}] \label{thm:bodybarrig}
Let $(G_{\tilde{H}},\tilde p)$ be a generic $L$-periodic body-bar realisation of the multi-graph $\tilde H$ in $\mathbb{R}^d$ with rank $k$ periodicity $\Gamma$, and $L:\Gamma\rightarrow \mathbb{R}^d$ be nonsingular. Then $(G_{\tilde{H}},\tilde p)$ is $L$-periodically rigid in $\mathbb{R}^d$ if and only if the quotient $\Gamma$-labelled graph $H$ of $\tilde{H}$  contains a spanning subgraph $(V,E)$ satisfying the following counts
\begin{itemize}
\item $|E|=\binom{d+1}{2}|V|-d-\binom{d-k}{2}$;
\item $|F|\leq \binom{d+1}{2}|V(F)|-d-\binom{d-k(F)}{2}$ $\quad$ for  all non-empty $F\subseteq E$,
\end{itemize}
where $k(F)$ is the rank of  $F$.
\end{theorem}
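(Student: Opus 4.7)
The plan is to translate $L$-periodic rigidity of the body-bar framework $(G_{\tilde H}, \tilde p)$ into a rank condition on its rigidity matrix via Theorem~\ref{prop:rigidity_matrix}, and then identify this rank with the rank function of a $\Gamma$-labelled count matroid whose bases are exactly the spanning subgraphs satisfying the two listed conditions. First I would reduce from the bar-joint rigidity matrix of $G_{\tilde H}$, of size roughly $|E(G_{\tilde H})| \times d|V(G_{\tilde H})|$, to a much smaller \emph{body-bar rigidity matrix} of size $|E(H)| \times \binom{d+1}{2}|V(H)|$. The reduction is possible because each internal complete subgraph $K(B^v_{\tilde H})$ is generically rigid in $\mathbb{R}^d$ with exactly $\binom{d+1}{2}$ trivial motions, so after a row reduction each body contributes $\binom{d+1}{2}$ columns encoding an infinitesimal rigid motion of $\mathbb{R}^d$, and each external bar $e=uv$ with gain $\psi(e)=\gamma$ contributes a single row whose entries are the $2$-extensor (Pl\"ucker coordinate) of the line through its endpoints, placed in the $u$-block using $p(u)$ and in the $v$-block using $p(v)+L(\gamma)$.

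Next I would establish the necessity of the two counts. By Theorem~\ref{prop:rigidity_matrix}, $L$-periodic rigidity of a generic framework is equivalent to the body-bar rigidity matrix having rank $\binom{d+1}{2}|V(H)| - d - \binom{d-k}{2}$, so a maximal linearly independent set of rows corresponds to a spanning edge set $E$ of cardinality $|E| = \binom{d+1}{2}|V(H)| - d - \binom{d-k}{2}$. For the hereditary bound, restricting the matrix to the rows indexed by a non-empty $F \subseteq E$ gives the rigidity matrix of a sub-framework whose periodicity group $\Gamma_F$ has rank $k(F)$; its space of trivial infinitesimal motions has dimension exactly $d + \binom{d-k(F)}{2}$, consisting of the $d$ translations of $\mathbb{R}^d$ together with the rotations stabilising the $k(F)$-dimensional lattice direction. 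This yields $|F| \leq \binom{d+1}{2}|V(F)| - d - \binom{d-k(F)}{2}$.

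For sufficiency, I would identify the generic $L$-periodic body-bar rigidity matroid with the $\Gamma$-labelled count matroid on $E(H)$ whose rank function is $r(F) = \binom{d+1}{2}|V(F)| - d - \binom{d-k(F)}{2}$. The main tool is a matroid-union argument: one shows that this count matroid decomposes as the union of $\binom{d+1}{2}$ copies of a simpler $\Gamma$-labelled frame matroid on $H$, in which a set of $\Gamma$-labelled edges is independent if and only if each of its connected components is either cycle-free or supports a single unbalanced cycle (with bookkeeping accounting for the constants $d$ and $\binom{d-k(F)}{2}$). Combining this decomposition with the genericity of the Pl\"ucker-coordinate entries of the body-bar rigidity matrix then yields the matroid equality, and hence sufficiency.

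The main obstacle will be establishing this matroid-union decomposition in the $\Gamma$-labelled setting. In the finite body-bar case ($k=0$) this recovers Tay's classical theorem, whose proof combines Nash-Williams' edge-disjoint spanning tree theorem with a Pl\"ucker-coordinate rank computation. In the periodic setting the graphic matroid must be replaced by a group-labelled frame matroid, and Nash-Williams' theorem by a $\Gamma$-labelled analogue that accounts for the extra $\binom{d-k(F)}{2}$ degrees of freedom contributed by unbalanced ``lattice-spanning'' cycles. Verifying that generic Pl\"ucker coordinates induce a matroid representation compatible with this decomposition uniformly over all subsets $F$, including those of strictly smaller rank $k(F) < k$, is the delicate heart of the argument, as carried out in~\cite{T2}.
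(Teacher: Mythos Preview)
The paper does not contain a proof of this theorem: it is stated as a result from the literature, quoted from \cite[Theorem~7.2]{T2} (and attributed there), with only a remark that the counting conditions come from Ross's conjecture \cite[Conjecture~5.1]{rossbb}. So there is no ``paper's own proof'' to compare your proposal against.

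That said, your outline is a faithful high-level summary of the strategy actually used in \cite{T2}: pass from the bar-joint rigidity matrix of $G_{\tilde H}$ to a Pl\"ucker-coordinate body-bar matrix with $\binom{d+1}{2}$ columns per body, read off the necessity of the counts from the dimension $d+\binom{d-k(F)}{2}$ of the trivial-motion space of an $F$-subframework, and obtain sufficiency by identifying the generic linear matroid of that matrix with a $\Gamma$-labelled count matroid expressed as a matroid union of frame-type matroids on $(H,\psi)$. You also correctly flag where the real work lies, namely the gain-graph analogue of Tay's tree-packing argument and the verification that generic Pl\"ucker entries realise the union matroid uniformly over all $F$ with varying $k(F)$.

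As written, however, your text is a proof \emph{plan}, not a proof: the matroid-union decomposition, the precise definition of the constituent frame matroids with their rank functions, and the genericity argument showing that the body-bar matrix attains the claimed rank on every basis are all deferred to \cite{T2}. If your intent is to record the theorem as a cited result, that matches exactly what the paper does; if your intent is to supply an independent proof, the sketch would need the concrete $\Gamma$-labelled Nash--Williams/Edmonds-type packing statement and the rank computation to be carried out in full.
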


\section{Conclusion and further comments}

Real-world structures, whether they are natural such as crystals or proteins, or man-made such as buildings or linkages, are usually non-generic, and often exhibit non-trivial symmetries. This fact has motivated a significant amount of research in recent years on how symmetry impacts the rigidity and flexibility of frameworks (see \cite{HoDCG}, for example, for a summary of results). In Theorem \ref{thm:2RGR}, we have shown that the sufficient condition given by Tanigawa in \cite{T1} for generic global rigidity of finite frameworks can be transformed to a sufficient condition for generic global rigidity of infinite $L$-periodic frameworks (under a fixed lattice $L$). It remains open whether this result can be extended to other types of frameworks with symmetries such as infinite periodic frameworks with (partially) flexible lattices or finite frameworks with point group symmetries.
Following the proof of Theorem \ref{thm:bodybarglobalrig}, such an extension would imply the characterisation of the generic global rigidity of finite body-bar frameworks with these symmetries by using the existing (local) rigidity characterisations of these frameworks by Tanigawa \cite{T2}. Furthermore, such a result would be useful for the characterisation of the generic global rigidity of \emph{body-hinge frameworks}  with symmetries (where the bodies are connected in pairs by $d-2$-dimensional hinges) such as in the (finite) generic version established by Jord\'{a}n, Kir\'{a}ly and Tanigawa \cite{jkt2}. However, the characterisation of generic (local) rigidity for periodic body-hinge frameworks is still open (even for fixed lattices). For finite symmetric body-hinge frameworks, such a characterisation is only known for groups of the form $\mathbb{Z}_2\times \mathbb{Z}_2\times \cdots \times \mathbb{Z}_2$ \cite{ST14}.
 A major goal in this research area is to obtain a combinatorial characterisation of the generic global rigidity of infinite $L$-periodic or finite symmetric \emph{molecular frameworks} in 3-space (i.e., body-hinge frameworks in 3-space with the added property that the lines of the hinges attached to each body all meet in a single point on that body), since they may be used to model crystals and protein structures. We note that for finite molecular frameworks,  their generic (local) rigidity was recently characterised by the celebrated result of Katoh and Tanigawa \cite{molecular}. However, their generic global rigidity has not yet been characterised, and there are also no generic local or global rigidity characterisations for infinite $L$-periodic or finite symmetric molecular frameworks \cite{portaetal}.

\section{Acknowledgements}

  The first author was supported by the Hungarian Scientific Research Fund of the National Research, Development and Innovation Office (OTKA, grant number K109240 and K124171). The second author was supported by the \'UNKP-17- 4 New National Excellence Program of the Ministry of Human Capacities of Hungary and by the Hungarian Scientific Research Fund of the National Research, Development and Innovation Office (OTKA, grant number K109240). The third author was supported by EPSRC First Grant EP/M013642/1.

\end{document}